\newtheorem{theo}{Theorem}
\newtheorem{lem}{Lemma}
\newtheorem{prop}{Proposition}
\newtheorem{rem}{Remark}
\newtheorem{exam}{Example}
\newtheorem{definiti}{Definition}
\newenvironment{proof}[1][Proof]{\noindent \textbf{#1.} }{\ \rule{0.5em}{0.5em}}
\begin{document}

\author{M. J. C\'{a}novas\thanks{%
Center of Operations Research, Miguel Hern\'{a}ndez University of Elche,
03202 Elche (Alicante), Spain (canovas@umh.es, parra@umh.es).} \and M. A. L%
\'{o}pez\thanks{%
Department of Mathematics, University of Alicante, 03080 Alicante, Spain
(marco.antonio@ua.es); CIAO, Federation University, Ballarat, Australia.
Research of this author is also partially supported by the Australian
Research Council (ARC) Discovery Grants Scheme (Project Grant \#
DP180100602). } \and B. S. Mordukhovich\thanks{
Department of Mathematics, Wayne State University, Detroit, MI 48202, USA
(boris@math.wayne.edu). Research of this author was partially supported by
the USA National Science Foundation under grants DMS-1512846 and
DMS-1808978, by the USA Air Force Office of Scientific Research grant
\#15RT04, and by Australian Research Council under grant DP-190100555.} \and %
J. Parra\footnotemark[2]}
\date{}
\title{Subdifferentials and Stability Analysis\\
of Feasible Set and Pareto Front Mappings\\
in Linear Multiobjective Optimization\thanks{%
This research has been partially supported by grants
MTM2014-59179-C2-(1,2)-P and PGC2018-097960-B-C2(1,2).}}
\maketitle

\begin{abstract}
The paper concerns multiobjective linear optimization problems in $\mathbb{R}%
^{n}$ that are parameterized with respect to the right-hand side
perturbations of inequality constraints. Our focus is on measuring the
variation of the feasible set and the Pareto front mappings around a nominal
element while paying attention to some specific directions. This idea is
formalized by means of the so-called epigraphical multifunction, which is
defined by adding a fixed cone to the images of the original mapping.
Through the epigraphical feasible and Pareto front mappings we describe the
corresponding vector subdifferentials and employ them to verifying
Lipschitzian stability of the perturbed mappings with computing the
associated Lipschitz moduli. The particular case of ordinary linear programs
is analyzed, where we show that the subdifferentials of both multifunctions
are proportional subsets. We also provide a method for computing the optimal
value of linear programs without knowing any optimal solution. Some
illustrative examples are also given in the paper.

\textbf{Key words. }Epigraphical set-valued mappings, feasible set mappings,
Lipschitz moduli, linear programming, optimal value functions,
multiobjective optimization.

\textbf{AMS Subject Classification:}\emph{\ }49J53, 90C31, 15A39, 90C05,
90C29.
\end{abstract}

\section{Introduction and Overview}

\label{intro}

The original motivation for this paper comes from analyzing Lipschitzian
behavior of the so-called Pareto front mapping associated with the \emph{%
multiobjective linear programming} (MLP) problem given by 
\begin{equation}
\begin{tabular}{ccc}
$MLP\left( b\right) :$ & \emph{\textrm{minimize}} & $\left( \left\langle
c_{1},x\right\rangle ,...,\left\langle c_{q},x\right\rangle \right) $ \\ 
& \emph{subject to} & $x\in \mathcal{F}\left( b\right) $,%
\end{tabular}
\label{eq_MOP}
\end{equation}%
where $x\in \mathbb{R}^{n}$ is the decision variable, where $%
c_{1},...,c_{q}\in \mathbb{R}^{n}$ are fixed, and where $\mathcal{F}\left(
b\right) $ is the \emph{feasible set} of the linear inequality system in $%
\mathbb{R}^{n}$ parameterized by its right-hand side (RHS) as 
\begin{equation}
\sigma \left( b\right) :=\big\{\left\langle a_{t},x\right\rangle \leq
b_{t},\,\,\,t\in T:=\left\{ 1,...,m\right\} \big\}  \label{eq_sigma}
\end{equation}%
with the coefficients $a_{t}\in \mathbb{R}^{n}$ fixed for each $t\in T$ and
the perturbation parameter $b=\left( b_{t}\right) _{t\in T}\in \mathbb{R}%
^{T} $ in the RHS of \eqref{eq_sigma}.

For each $b\in \mathbb{R}^{T}$ denote by $\mathcal{S}\left( b\right) $ the
set of \emph{nondominated solutions} to $MLP\left( b\right) $, i.e., $%
\mathcal{S}\left( b\right) $ is formed by all $x\in \mathcal{F}\left(
b\right) $ such that there does not exist any other feasible point $y\in 
\mathcal{F}\left( b\right) $ for which $\left\langle c_{i},y\right\rangle
\leq \left\langle c_{i},x\right\rangle $ whenever $i=1,...,q$ and $%
\left\langle c_{i_{0}},y\right\rangle <\left\langle c_{i_{0}},x\right\rangle 
$ for some $i_{0}\in \{1,...,q\}$. Alternatively it can be reformulated as
follows: considering the mapping $\mathcal{C}\colon \mathbb{R}%
^{n}\rightarrow \mathbb{R}^{q}$ defined by $\mathcal{C}(x):=\left(
\left\langle c_{1},x\right\rangle ,...,\left\langle c_{q},x\right\rangle
\right) $, we have the equivalence 
\begin{equation*}
x\in \mathcal{S}\left( b\right) \ \Leftrightarrow \ \big(\mathcal{C}(%
\mathcal{F}\left( b\right) -x)\big)\cap \left( -\mathbb{R}_{+}^{q}\right)
=\{0_{q}\}.
\end{equation*}

Associated with the parameterized problem (\ref{eq_MOP}), we define the\emph{%
\ Pareto front mapping} $\mathcal{P}\colon\mathbb{R}^{T}\mathbb{%
\rightrightarrows R}^{q}$ by 
\begin{equation}  \label{eq_pareto}
\mathcal{P}\left(b\right):=\left\{\left( \left\langle
c_{1},x\right\rangle,...,\left\langle c_{q},x\right\rangle\right),\text{ }
x\in\mathcal{S}\left(b\right)\right\}=\mathcal{C}(\mathcal{S}\left(b\right)).
\end{equation}
Observe that in the case of ordinary/scalar linear programming (LP) problem,
i.e., \textbf{\ }when $q=1,$ the Pareto front mapping $\mathcal{P}$ reduces
to the real-valued \emph{optimal value function} known also as the `marginal
function' in variational analysis.\vspace*{0.05in}

Appropriate tools of \emph{variational analysis} and \emph{generalized
differentiation} are our primary machinery to study the major (robust) \emph{%
Lipschitzian stability} notion for the feasible set and Pareto front
mappings. To proceed, we need to compute the \emph{subdifferential }of these
set-valued mappings/multifunctions, which is defined via the \emph{%
coderivative} of the corresponding epigraphical multifunctions; see
Section~2. At this moment we advance that a natural definition of the \emph{%
epigraphical Pareto front mapping} $\mathcal{E}_{\mathcal{P}}\colon\mathbb{R}%
^{T}\mathbb{\rightrightarrows R}^{q}$ is given by 
\begin{equation}  \label{eq_epi_Pareto_front}
\mathcal{E}_{\mathcal{P}}\left(b\right):=\mathcal{P}\left(b\right)+\mathbb{R}%
_{+}^{q},
\end{equation}
where $\mathbb{R}_{+}^{q}$ is formed by the elements of $\mathbb{R}^{q}$
with nonnegative components.

Roughly speaking, while analyzing optimality in MLP we are interested only
in that region of the feasible set where optimal/nondominated solutions may
be located. A possible idea to skip the noninteresting regions is to
consider a certain epigraphical mapping associated with the feasible set
mapping. In this way we define the \emph{epigraphical feasible set mapping} $%
\mathcal{E}_{\mathcal{F}}\colon\mathbb{R}^{T}\mathbb{\rightrightarrows R}%
^{n} $ by 
\begin{equation}  \label{eq_epi mapp}
\mathcal{E}_{\mathcal{F}}\left(b\right):=\mathcal{F}\left(b\right)+\left%
\{c_{1},...,c_{q}\right\}^{\circ },
\end{equation}
where $\Omega^{\circ}:=\left\{y\in\mathbb{R}^{n}|\left\langle
y,x\right\rangle\ge 0\;\text{ for all }\;x\in\mathbb{R}^{n}\right\}$ stands
for the (positive) polar cone of the set $\Omega\subset\mathbb{R}^{n}$.%
\vspace*{0.05in}

The \emph{main contributions} of our paper are precise calculations of the
subdifferentials of the set-valued mappings $\mathcal{F}$ and $\mathcal{P}$
with the subsequent usage of them to verify Lipschitzian stability of these
mappings and computing the corresponding Lipschitz moduli by invoking the
powerful machinery of variational analysis. We show below that the
subdifferentials of these multifunctions and their Lipschitz moduli are
closely related as seen in Theorems~\ref{Cor_lip_EP} and \ref{Prop_subdiffF}%
, and the established relationships are particularly clear in the case of
ordinary (single-objective) linear programs; see Proposition~\ref{Prop_sect5}
and Theorem~\ref{Th lipV}.

Given a mapping $\mathcal{M}\colon Y\rightrightarrows Z$ between metric
spaces $Y$ and $Z$ with the graph 
\begin{equation*}
\mathrm{gph}\mathcal{M}:=\big\{(y,z)\in Y\times Z\big|\;z\in \mathcal{M}(y%
\big\}
\end{equation*}%
and with the same notation $d$ for the metrics on $Y$ and $Z$, its
Lipschitzian behavior is analyzed locally around a fixed point $\left( 
\overline{y},\overline{z}\right) \in \mathrm{gph}\mathcal{M}$ while
reflecting the rate of variation of its images with respect to the variation
of the corresponding preimages. Here we focus on the most natural graphical
extension of the classical local Lipschitz continuity to set-valued mappings
that is spread in variational analysis as the
Lipschitz-like/pseudo-Lipschitz/Aubin property. For definiteness let us say
that $\mathcal{M}$ is \emph{Lipschitz-like} around $\left( \overline{y},%
\overline{z}\right) \in \mathrm{gph}\mathcal{M}$ if there exist
neighborhoods $U\subset Y$ and $V\subset Z\,\ $of $\overline{y}$ and $%
\overline{z}$, respectively, and a constant $\ell \geq 0$ such that we have
the linear estimate 
\begin{equation}
d\big(z,\mathcal{M}\left( y^{\prime }\right) \big)\leq \ell \,d\left(
y,y^{\prime }\right) \;\text{ for all }\;y,y^{\prime }\in U\;\text{ and all }%
\;z\in V\cap \mathcal{M}\left( y\right) .  \label{eq_Aubin_prop}
\end{equation}%
Each constant $\ell $ ensuring (\ref{eq_Aubin_prop}) for associated
neighborhoods $U$ and $V\,\ $is called a Lipschitz constant and the infimum
of such Lipschitz constants is called the \emph{Lipschitz modulus}, or the 
\emph{exact Lipschitz bound} of $\mathcal{M}$ around $\left( \overline{y},%
\overline{z}\right) $, and is denoted by $\mathrm{lip}\mathcal{M}\left( 
\overline{y},\overline{z}\right) $. We can easily check that 
\begin{equation}
\mathrm{lip}\mathcal{M}\left( \overline{y},\overline{z}\right) =\underset{%
\QTATOP{y,y^{\prime }\rightarrow \overline{y}}{z\rightarrow \overline{z}%
,z\in \mathcal{M}\left( y\right) }}{\lim \sup }\frac{d\big(z,\mathcal{M}%
\left( y^{\prime }\right) \big)}{d\left( y,y^{\prime }\right) }  \label{mod}
\end{equation}%
under the convention that $0/0:=0$. It has been well recognized in
variational analysis that the Lipschitz-like property \eqref{eq_Aubin_prop}
and its inverse mapping equivalences known as \emph{metric regularity} and 
\emph{linear openness/covering} play a fundamental role in many aspects of
optimization, equilibrium, systems control, and applications; see the
monographs \cite{bz05,DoRo,Ioffe17,KlKu02,Mor06i,Mor18,rw} and the
references therein.

Using the modulus representation \eqref{mod}, we can rephrase that the main
contribution of this paper is to \emph{explicitly compute} the quantities $%
\mathrm{lip}\mathcal{E}_{\mathcal{F}}\left( \overline{b},\overline{x}\right) 
$ and $\mathrm{lip}\mathcal{E}_{\mathcal{P}}\left( \overline{b},\overline{p}%
\right) ,$ with $\left( \overline{b},\overline{x}\right) \in \mathrm{gph}%
\mathcal{E}_{\mathcal{F}}$ and $\left( \overline{b},\overline{p}\right) \in 
\mathrm{gph}\mathcal{E}_{\mathcal{P}}$ respectively, entirely in terms of
the given data of \eqref{eq_MOP} and \eqref{eq_sigma}. Furthermore, we
advance here that the number $\mathrm{lip}\mathcal{E}_{\mathcal{P}}\left( 
\overline{b},\overline{p}\right) $ provides a lower estimate of $\mathrm{lip}%
\mathcal{P}\left( \overline{b},\overline{p}\right) $, and that both
Lipschitz moduli \emph{agree} for ordinary \emph{linear programs} as shown
in Section~5. Having the precise formulas for computing the moduli $\mathrm{%
lip}\mathcal{E}_{\mathcal{F}}\left( \overline{b},\overline{x}\right) $ and $%
\mathrm{lip}\mathcal{E}_{\mathcal{P}}\left( \overline{b},\overline{p}\right) 
$, the \emph{necessary and sufficient conditions} for \emph{Lipschitzian
stability} of the mappings (\ref{eq_epi_Pareto_front}) and (\ref{eq_epi mapp}%
)---in the sense of the validity of the Lipschitz-like property for these
mappings around the reference points---are formulated now as, respectively, 
\begin{equation*}
\mathrm{lip}\mathcal{E}_{\mathcal{F}}\left( \overline{b},\overline{x}\right)
<\infty \;\mbox{ and }\;\mathrm{lip}\mathcal{E}_{\mathcal{P}}\left( 
\overline{b},\overline{p}\right) <\infty .
\end{equation*}

These achievements are largely based on the \emph{subdifferential} notion
for \emph{set-valued mappings} with \emph{ordered values} introduced in \cite%
{BaMo07} (see also \cite{BaMo10,Mor18}) via the coderivatives concept for
mappings and on the \emph{coderivative criterion} for the Lipschitz-like
property of multifunctions established in \cite{Mo93}. The passage from
coderivatives to subdifferentials of ordered mappings was accomplished in 
\cite{BaMo07} via the usage of \emph{epigraphical multifunctions}: the
pattern well understood in variational analysis for the
subdifferential-coderivative relationship concerning scalar
(extended-real-valued) functions; see, e.g., \cite[Vol.~1, p.\ 84]{Mor06i}.

It is worth mentioning that some coderivative analysis of frontier and
efficient solution mapping was provided in \cite{huy-mor-yao08} for problems
of vector optimization with respect to the so-called \emph{generalized order
optimality} (including Pareto efficiency) in infinite-dimensional spaces.
However, neither precise coderivative formulas, nor subdifferential
analysis, nor computations of Lipschitz moduli were obtained in the general
setting of \cite{huy-mor-yao08} in contrast to what is done in this paper.

Furthermore, while confining to the case of ordinary/scalar linear programs
where $\mathcal{P}$ is the optimal value function, the reader is addressed
to \cite{GCPT19} for different formulas concerning Lipschitz moduli in
various parametric frameworks. Note also that Lipschitzian behavior of the
`ordinary' feasible set mapping $\mathcal{F}$ and the computation of its
modulus were derived for more general models of semi-infinite and infinite
programming in \cite{CDLP05} and \cite{clmp09}. Other stability properties
of the feasible set mapping of linear semi-infinite systems were analyzed in 
\cite[Chapter~6]{GoLo98}.\textbf{\ }Lipschitzian behavior of the optimal
set, again in the context of linear programming problems (in fact, in a
continuous convex semi-infinite setting allowing also perturbations of the
objective function) was studied in \cite{CKLP07}, whereas the associated
Lipschitz modulus was computed in \cite{CGP08}.\vspace*{0.05in}

The rest of the paper is organized as follows. In Section~\ref{prelim} we
present the necessary notation, definitions, and results about
coderivatives, subdifferentials, and Lipschitz moduli that are needed later
on. Section~\ref{feas} is devoted to subdifferential analysis and
Lipschitzian stability of the epigraphical feasible set mappings $\mathcal{E}%
_{\mathcal{F}}$ from \eqref{eq_epi mapp}. Specifically, we provide explicit
descriptions of the subdifferential of $\mathcal{F}$ and the Lipschitz
modulus of $\mathcal{E}_{\mathcal{F}}$ at a given point of its graph. In the
subsequent Section~\ref{scal} we develop a constructive procedure for
deriving the representation of such a mapping as the feasible set mapping
associated with new parameterized systems of linear programming. Section~\ref%
{pareto} is focussed on the precise computations of subdifferential and
Lipschitz modulus of the epigraphical Pareto front mapping $\mathcal{E}_{%
\mathcal{P}}$ from \eqref{eq_epi_Pareto_front}. In Section~\ref{lp} we
consider the case of ordinary linear programs (with only one objective
function) and show that even in this case our results are new. In
particular, we establish exact relationships between the subdifferentials
and Lipschitz moduli of the set-valued mappings $\mathcal{E}_{\mathcal{F}}$
and $\mathcal{E}_{\mathcal{P}}$ under consideration. Both Sections~\ref%
{pareto} and \ref{lp} contain illustrative examples of their own interest.
The final Section~\ref{conc} summarizes the obtained results and discusses
some directions of future research.\vspace*{0.05in}

Throughout the paper we use the standard notion in variational analysis and
optimization. Recall that $\mathrm{conv\,}\Omega $, $\mathrm{cone\,}\Omega $%
, and $\mathrm{span\,}\Omega $ stand, respectively, for the \emph{convex hull%
}, the \emph{conic convex hull}, and the \emph{linear subspace} generated by
the set $\Omega \subset \mathbb{R}^{n}$ under the convention that $\mathrm{%
cone\,}\emptyset =\{0_{n}\}$, where $0_{n}$ is the origin of $\mathbb{R}^{n}$%
. If $\Omega $ is convex, by $O^{+}(\Omega )$ we represent the \emph{%
recession cone }of $\Omega $. The space of decision variables $\mathbb{R}%
^{n} $ is endowed with an arbitrary norm $\left\Vert \cdot \right\Vert $,
while the space of parameters $\mathbb{R}^{T}$ is equipped with the supremum
norm 
\begin{equation}
\left\Vert b\right\Vert _{\infty }:=\sup_{t\in T}\left\vert b_{t}\right\vert
.  \label{sup-norm}
\end{equation}

\section{Preliminaries and First Results}

\label{prelim}

In this section, unless otherwise stated, $\mathcal{M}\colon
Y\rightrightarrows Z$ is a set-valued mapping between Banach spaces $Y$ and $%
Z$ which topological duals are denoted by $Y^{\ast }$ and $Z^{\ast }$,
respectively. The \emph{coderivative} of $\mathcal{M}$ at $\left( \overline{y%
},\overline{z}\right) \in \mathrm{gph}\mathcal{M}$ is a positively
homogeneous multifunction $D^{\ast }\mathcal{M}\left( \overline{y},\overline{%
z}\right) \colon Z^{\ast }\rightrightarrows Y^{\ast }$ defined by 
\begin{equation}
y^{\ast }\in D^{\ast }\mathcal{M}\left( \overline{y},\overline{z}\right)
\left( z^{\ast }\right) \Longleftrightarrow \left( y^{\ast },-z^{\ast
}\right) \in N\big(\left( \overline{y},\overline{z}\right) ;\mathrm{gph}%
\mathcal{M}\big),  \label{cod}
\end{equation}%
where $N\left( \left( \overline{y},\overline{z}\right) ;\mathrm{gph}\mathcal{%
M}\right) $ is the (basic, limiting, Mordukhovich) \emph{normal cone} to $%
\mathrm{gph}\mathcal{M}$ at $\left( \overline{y},\overline{z}\right) $; see,
e.g., \cite{Mor06i} and \cite{rw}. For simplicity, $\left\Vert \cdot
\right\Vert $ stands for the norm in any Banach space $X$, and $\left\Vert
\cdot \right\Vert _{\ast }$ is the corresponding dual norm, i.e., 
\begin{equation*}
\left\Vert x^{\ast }\right\Vert _{\ast }=\sup \big\{\left\langle x^{\ast
},x\right\rangle \big|\;\left\Vert x\right\Vert \leq 1,\text{ }x\in X\big\},%
\text{ }x^{\ast }\in X^{\ast },
\end{equation*}%
where $\left\langle .,.\right\rangle $ denotes the canonical pairing between 
$X$ and $X^{\ast }$. If no confusion arises, from now on we skip the
subscript `$_{\ast }$' in the dual norm notation.

When both spaces $Y$ and $Z$ are finite-dimensional and the graph of $%
\mathcal{M}$ is locally closed around $(\overline{y},\overline{z})\in 
\mathrm{gph}\mathcal{M}$, there is the following \emph{precise formula} for
the \emph{computing the Lipschitz modulus} of $\mathcal{M}\left( \overline{y}%
,\overline{z}\right) $: 
\begin{equation}
\mathrm{lip}\mathcal{M}\left( \overline{y},\overline{z}\right) =\left\Vert
D^{\ast }\mathcal{M}\left( \overline{y},\overline{z}\right) \right\Vert
:=\sup \big\{\left\Vert y^{\ast }\right\Vert _{\ast }\big|y^{\ast }\in
D^{\ast }\mathcal{M}\left( \overline{y},\overline{z}\right) (z^{\ast
}),\Vert z^{\ast }\Vert _{\ast }=1\big\},  \label{eq_lipM}
\end{equation}%
which was obtained in \cite{Mo93}. We also refer the reader to \cite[%
Theorem~9.40]{rw} for another proof of this result, which was labeled
therein as the Mordukhovich criterion. An infinite-dimensional extension of %
\eqref{eq_lipM} was derived in \cite[Theorem~4.10]{Mor06i}. It is more
involved and is not used in this paper dealing with finite-dimensional
multiobjective optimization problems of type \eqref{eq_MOP}. A simplified
proof of \eqref{eq_lipM} in finite dimensions was given in \cite[Theorem~3.3]%
{Mor18}.

If the graph of $\mathcal{M}$ is \emph{convex}, the normal cone in %
\eqref{cod} reduces to the normal convex of convex analysis, and hence $%
y^{\ast}\in D^{\ast}\mathcal{M}\left(\overline{y},\overline{z}%
\right)\left(z^{\ast}\right)$ if and only if 
\begin{equation*}
\left\langle\left(y^{\ast},-z^{\ast}\right),\left(y^{\prime}-\overline{y}%
,z^{\prime}-\overline{z}\right)\right\rangle\le 0\;\text{ for all }%
\;\left(y^{\prime},z^{\prime}\right)\in\mathrm{gph}\mathcal{M},
\end{equation*}
which is equivalent to the description 
\begin{equation}  \label{eq_conderiv_convex}
\left\langle y^{\ast},y^{\prime}-\overline{y}\right\rangle\le\left\langle
z^{\ast},z^{\prime}-\overline{z}\right\rangle\;\text{ for all }%
\;\left(y^{\prime},z^{\prime }\right)\in\mathrm{gph}\mathcal{M}.
\end{equation}

Given further a closed and convex \emph{ordering cone} $\Theta\subset Z$,
the \emph{epigraphical multifunction}\emph{\ }$\mathcal{E}_{\mathcal{M}%
}\colon X\rightrightarrows Z$ associated with $\mathcal{M}$ and the cone $%
\Theta$ is that which graph $\mathrm{gph}\mathcal{E}_{\mathcal{M}}$
coincides with the \emph{epigraph} of $\mathcal{M}$ with respect to $\Theta$%
. In other words, we have $\mathrm{\mathcal{E}_{\mathcal{M}}}\left(y\right):=%
\mathcal{M}\left(y\right)+\Theta$ and 
\begin{equation*}
\mathrm{epi\mathcal{M}}:=\mathrm{gph\mathcal{E}_{\mathcal{M}}}=\big\{%
\left(y,z\right)\big|\;z\in\mathcal{M}\left(y\right)+\Theta\big\},
\end{equation*}
where we skip indicating $\Theta$ in the epigraphical notation.\vspace*{%
0.05in}

In accordance with \cite{BaMo07}, we present the following definition of the
subdifferential of $\mathcal{M}$ at the reference point of its epigraph with
respect to $\Theta$.

\begin{definiti}
\label{def fre subdiff} Let $\left( \overline{y},\overline{z}\right) \in 
\mathrm{epi\mathcal{M}}$ be given. The \textsc{subdifferential} of $\mathcal{%
M}$ at $\left( \overline{y},\overline{z}\right) $ denoted as $\partial 
\mathcal{M}\left( \overline{y},\overline{z}\right) $ is a subset of $Y^{\ast
}$ defined by 
\begin{equation}
\partial \mathcal{M}\left( \overline{y},\overline{z}\right) :=\big\{y^{\ast
}\in D^{\ast }\mathcal{E}_{\mathcal{M}}\left( \overline{y},\overline{z}%
\right) \left( z^{\ast }\right) \big|-z^{\ast }\in N\left( 0;\Theta \right)
,\;\Vert z^{\ast }\Vert _{\ast }=1\big\},  \label{eq_def_subdif_gen}
\end{equation}%
where $N\left( 0;\Theta \right) \subset Z^{\ast }$ is the convex normal cone
to the set $\Theta $ at the origin of $Z$.
\end{definiti}

Note that if $\mathcal{M}\colon Y\rightarrow\mathbb{R}$ is a proper convex
function with\textbf{\ }$\Theta=\mathbb{R}_{+}$, then $\mathrm{gph\mathcal{E}%
_{\mathcal{M}}}$ is its standard epigraph, and for any $\overline{y}\in%
\limfunc{dom}\mathcal{M}$ the set $\partial \mathcal{M}\left(\overline{y},%
\mathcal{M}(\overline{z})\right)$ is the classical subdifferential of\textbf{%
\ }$\mathcal{M}$ at $\overline{y}$ in the sense of convex analysis.

Observe also that the set $-N\left(0;\Theta\right) $ is nothing else but the
polar cone $\Theta^{\circ }$, and thus we have the following representation
of the coderivative of $\mathrm{\mathcal{E}_{\mathcal{M}}}$ in terms of the
graph $\mathrm{gph}\mathcal{M}$ instead of the epigraph $\mathrm{epi}%
\mathcal{M}$.

\begin{prop}
\label{coder-epi} Assume that $\mathrm{epi\mathcal{M}}$ is a convex set, and
let $\left( \overline{y},\overline{z}\right)\in\mathrm{epi\mathcal{M}}$.
Then for any $z^{\ast}\in Z^{\ast}$ we have the representation 
\begin{align*}
&D^{\ast}\mathcal{E}_{\mathcal{M}}\left(\overline{y},\overline{z}\right)
\left(z^{\ast}\right)\medskip \\
&=\left\{%
\begin{array}{l}
\{y^{\ast}\in Y^{\ast}\mid\left\langle y^{\ast},y^{\prime}-\overline{y}%
\right\rangle\le\left\langle z^{\ast},z^{\prime}-\overline{z} \right\rangle%
\text{ }\forall\left(y^{\prime},z^{\prime}\right)\in\mathrm{gph}\mathcal{M\}}%
\;\text{ if }\;z^{\ast}\in\Theta^{\circ},\medskip \\ 
\emptyset\;\text{ if }\;z^{\ast}\notin\Theta^{\circ}.%
\end{array}
\right.
\end{align*}
\end{prop}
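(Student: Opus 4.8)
The plan is to reduce the coderivative of the epigraphical multifunction $\mathcal{E}_{\mathcal{M}}$ to the convex-analysis normal cone and then to exploit the additive structure $\mathrm{epi}\mathcal{M}=\mathrm{gph}\mathcal{E}_{\mathcal{M}}$. First I would invoke the convexity hypothesis on $\mathrm{epi}\mathcal{M}=\mathrm{gph}\mathcal{E}_{\mathcal{M}}$ so that the limiting normal cone in the definition \eqref{cod} of $D^{\ast}\mathcal{E}_{\mathcal{M}}$ coincides with the normal cone of convex analysis. Applying the already-derived characterization \eqref{eq_conderiv_convex} verbatim to the mapping $\mathcal{E}_{\mathcal{M}}$ gives, for any $z^{\ast}\in Z^{\ast}$,
\begin{equation*}
y^{\ast}\in D^{\ast}\mathcal{E}_{\mathcal{M}}(\overline{y},\overline{z})(z^{\ast})\iff\langle y^{\ast},y^{\prime}-\overline{y}\rangle\le\langle z^{\ast},z^{\prime}-\overline{z}\rangle\quad\forall(y^{\prime},z^{\prime})\in\mathrm{gph}\mathcal{E}_{\mathcal{M}}.
\end{equation*}
So at this point the statement is about replacing $\mathrm{gph}\mathcal{E}_{\mathcal{M}}=\mathrm{epi}\mathcal{M}$ by $\mathrm{gph}\mathcal{M}$ on the right-hand side, and simultaneously extracting the constraint $z^{\ast}\in\Theta^{\circ}$.

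Next I would rewrite a generic point of $\mathrm{epi}\mathcal{M}$ as $(y^{\prime},z^{\prime}+\theta)$ with $(y^{\prime},z^{\prime})\in\mathrm{gph}\mathcal{M}$ and $\theta\in\Theta$, and substitute this into the inequality above. The right-hand side becomes $\langle z^{\ast},z^{\prime}-\overline{z}\rangle+\langle z^{\ast},\theta\rangle$, so the inequality over all of $\mathrm{epi}\mathcal{M}$ is equivalent to
\begin{equation*}
\langle y^{\ast},y^{\prime}-\overline{y}\rangle\le\langle z^{\ast},z^{\prime}-\overline{z}\rangle+\langle z^{\ast},\theta\rangle\quad\text{for all }(y^{\prime},z^{\prime})\in\mathrm{gph}\mathcal{M}\text{ and all }\theta\in\Theta.
\end{equation*}
Fixing $(y^{\prime},z^{\prime})$ and letting $\theta$ range over the cone $\Theta$, the term $\langle z^{\ast},\theta\rangle$ is either identically $0$ along rays or unbounded below; finiteness of the infimum over $\theta\in\Theta$ forces $\langle z^{\ast},\theta\rangle\ge 0$ for all $\theta\in\Theta$, i.e. $z^{\ast}\in\Theta^{\circ}$ (here I use $-N(0;\Theta)=\Theta^{\circ}$, noted just before the proposition, together with the fact that $0\in\Theta$ since $\Theta$ is a cone). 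Conversely, if $z^{\ast}\in\Theta^{\circ}$ then $\langle z^{\ast},\theta\rangle\ge 0$ and the worst case is $\theta=0$, so the family of inequalities over $\mathrm{epi}\mathcal{M}$ collapses to the family over $\mathrm{gph}\mathcal{M}$ alone. This yields the first branch; and if $z^{\ast}\notin\Theta^{\circ}$, no $y^{\ast}$ can satisfy the inequality (the right-hand side is $-\infty$ while the left-hand side is finite for, e.g., $(y^{\prime},z^{\prime})=(\overline{y},\overline{z})$, which lies in the closure but one argues directly on a ray), giving the empty set in the second branch.

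The only delicate point is the last implication: one must be careful that $(\overline{y},\overline{z})$ itself need not lie in $\mathrm{gph}\mathcal{M}$, only in $\mathrm{epi}\mathcal{M}$. I would handle the emptiness claim cleanly by picking any single $(y^{\prime},z^{\prime})\in\mathrm{gph}\mathcal{M}$ with $(y^{\prime},z^{\prime}+\theta_{0})=(\overline{y},\overline{z})$ for some $\theta_{0}\in\Theta$ (such a representation exists since $(\overline{y},\overline{z})\in\mathrm{epi}\mathcal{M}$), then testing with $\theta=\theta_{0}+\lambda\theta^{\ast}$ where $\theta^{\ast}\in\Theta$ satisfies $\langle z^{\ast},\theta^{\ast}\rangle<0$ and letting $\lambda\to+\infty$; the right-hand side tends to $-\infty$ while the left-hand side stays fixed, a contradiction unless $D^{\ast}\mathcal{E}_{\mathcal{M}}(\overline{y},\overline{z})(z^{\ast})=\emptyset$. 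I expect this ray argument, and the bookkeeping that $0\in\Theta$ so that the infimum over $\theta$ is genuinely attained at $\theta=0$ in the favorable case, to be the main (though modest) obstacle; everything else is a direct unpacking of \eqref{eq_conderiv_convex} applied to $\mathcal{E}_{\mathcal{M}}$.
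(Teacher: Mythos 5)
Your proposal is correct and follows essentially the same route as the paper's proof: reduce to the convex-analysis inequality over $\mathrm{gph}\,\mathcal{E}_{\mathcal{M}}$, decompose points of $\mathrm{epi}\,\mathcal{M}$ as a graph point plus an element of $\Theta$, use $\left\langle z^{\ast},\theta\right\rangle\geq 0$ to pass to $\mathrm{gph}\,\mathcal{M}$ when $z^{\ast}\in\Theta^{\circ}$, and test along a direction $\theta^{\ast}\in\Theta$ with $\left\langle z^{\ast},\theta^{\ast}\right\rangle<0$ to get emptiness otherwise. Your extra care about $(\overline{y},\overline{z})$ lying only in $\mathrm{epi}\,\mathcal{M}$ is handled in the paper by observing directly that $(\overline{y},\overline{z}+u)\in\mathrm{gph}\,\mathcal{E}_{\mathcal{M}}$, but the argument is the same.
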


\begin{proof}
Take $z^{\ast }\in \Theta ^{\circ }$. By using the definitions of the
coderivative \eqref{cod} and of the epigraphical multifunction $\mathcal{E}_{%
\mathcal{M}}$, we get due to the convexity of $\mathrm{epi\mathcal{M}}$ ($=%
\mathrm{gph}\mathcal{E}_{\mathcal{M}}$) that 
\begin{equation}
D^{\ast }\mathcal{E}_{\mathcal{M}}\left( \overline{y},\overline{z}\right)
\left( z^{\ast }\right) =\big\{y^{\ast }\mid \left\langle y^{\ast
},y^{\prime }-\overline{y}\right\rangle \leq \left\langle z^{\ast
},z^{\prime }-\overline{z}\right\rangle \;\forall \left( y^{\prime
},z^{\prime }\right) \in \mathrm{gph}\mathcal{E}_{\mathcal{M}}\big\}.
\label{eq_EM_M}
\end{equation}%
Let us show that $\mathrm{gph}\mathcal{E}_{\mathcal{M}}$ can be equivalently
replaced by $\mathrm{gph}\mathcal{M}$ in \eqref{eq_EM_M}. Indeed, take any $%
y^{\ast }\in Y^{\ast }$ satisfying \eqref{eq_conderiv_convex}. Pick further
any $\left( \widetilde{y},\widetilde{z}\right) \in \mathrm{gph}\mathcal{E}_{%
\mathcal{M}}$ and write $\widetilde{z}=z^{\prime }+u\,\ $with $z^{\prime
}\in \mathcal{M}\left( \widetilde{y}\right) $ and $u\in \Theta $. Then we
obtain the inequalities 
\begin{equation*}
\left\langle y^{\ast },\widetilde{y}-\overline{y}\right\rangle \leq
\left\langle z^{\ast },z^{\prime }-\overline{z}\right\rangle \leq
\left\langle z^{\ast },\widetilde{z}-\overline{z}\right\rangle
\end{equation*}%
due to $\left\langle z^{\ast },u\right\rangle \geq 0$. It gives us the
claimed coderivative formula for $z^{\ast }\in \Theta ^{\circ }$.

Suppose now that $z^{\ast }\notin \Theta ^{\circ }$ and find $u\in \Theta $
such that $\left\langle z^{\ast },u\right\rangle <0$. Arguing by
contradiction, assume that there is $y^{\ast }\in D^{\ast }\mathcal{E}_{%
\mathcal{M}}\left( \overline{y},\overline{z}\right) \left( z^{\ast }\right) $%
, i.e., by \eqref{eq_conderiv_convex} we have 
\begin{equation*}
\left\langle y^{\ast },y^{\prime }-\overline{y}\right\rangle \leq
\left\langle z^{\ast },z^{\prime }-\overline{z}\right\rangle \;\text{ for
all }\;\left( y^{\prime },z^{\prime }\right) \in \mathrm{gph}\mathcal{E}_{%
\mathcal{M}}.
\end{equation*}%
Since $\mathcal{E}_{\mathcal{M}}\left( \overline{y}\right) +\Theta =\mathcal{%
E}_{\mathcal{M}}\left( \overline{y}\right) $, it follows that $\left( 
\overline{y},\overline{z}+u\right) \in \mathrm{gph}\mathcal{E}_{\mathcal{M}}$%
, and therefore 
\begin{equation*}
0=\left\langle y^{\ast },\overline{y}-\overline{y}\right\rangle \leq
\left\langle z^{\ast },u\right\rangle ,
\end{equation*}%
which is a contradiction that completes the proof of the proposition.
\end{proof}

\vspace*{0.05in}

Employing Proposition~\ref{coder-epi} leads us to deriving effective
representations of the subdifferential of $\mathcal{M}$ and the Lipschitz
modulus of $\mathcal{E}_{\mathcal{M}}$ as well as to a relation between the
latter and the Lipschitz modulus of $\mathcal{M}$ at the reference point.

\begin{theo}
\label{Cor_lip_subdiff} Let the epigraphical set $\mathrm{epi\mathcal{M}}$
be convex, and let $\left( \overline{y},\overline{z}\right) \in \mathrm{epi%
\mathcal{M}}$. Then we have the subdifferential representation 
\begin{equation}
\partial \mathcal{M}\left( \overline{y},\overline{z}\right) =\bigcup_{\QATOP{%
\scriptstyle{z^{\ast }\in \Theta ^{\circ }}}{\scriptstyle{\Vert z^{\ast
}\Vert }_{\ast }{=1}}}\big\{y^{\ast }\big|\;\left\langle y^{\ast },y^{\prime
}-\overline{y}\right\rangle \leq \left\langle z^{\ast },z^{\prime }-%
\overline{z}\right\rangle \forall \left( y^{\prime },z^{\prime }\right) \in 
\mathrm{gph}\mathcal{M}\big\}.  \label{eq_subdif_convex}
\end{equation}%
If in addition $Y$ and $Z$ are finite-dimensional and if the set $\mathrm{epi%
\mathcal{M}}$ is locally closed around $\left( \overline{y},\overline{z}%
\right) $, then the Lipschitz modulus of $\mathcal{E}_{\mathcal{M}}$ at $%
\left( \overline{y},\overline{z}\right) $ is computed by 
\begin{equation}
\mathrm{lip}\mathcal{E}_{\mathcal{M}}\left( \overline{y},\overline{z}\right)
=\sup \big\{\Vert y^{\ast }\Vert _{\ast }\;\big|\;y^{\ast }\in \partial 
\mathcal{M}\left( \overline{y},\overline{z}\right) \big\}.  \label{eq_lip_EM}
\end{equation}%
Assuming furthermore that $\left( \overline{y},\overline{z}\right) \in 
\mathrm{gph}\mathcal{M}$ and that the set $\mathrm{gph}\mathcal{M}$ is
locally closed around this point, we conclude that 
\begin{equation}
\mathrm{lip}\mathcal{E}_{\mathcal{M}}\left( \overline{y},\overline{z}\right)
\leq \mathrm{lip}\mathcal{M}\left( \overline{y},\overline{z}\right) .
\label{eq_lipEmenorlip}
\end{equation}
\end{theo}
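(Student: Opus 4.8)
The plan is to prove the three assertions of Theorem~\ref{Cor_lip_subdiff} in order, using Proposition~\ref{coder-epi} as the workhorse. For the subdifferential formula \eqref{eq_subdif_convex}, I would start from Definition~\ref{def fre subdiff}, which expresses $\partial\mathcal{M}(\overline{y},\overline{z})$ as the union over $z^\ast$ with $-z^\ast\in N(0;\Theta)$ and $\|z^\ast\|_\ast=1$ of the sets $D^\ast\mathcal{E}_{\mathcal{M}}(\overline{y},\overline{z})(z^\ast)$. Since $-N(0;\Theta)=\Theta^\circ$ (already noted in the excerpt), the index set is exactly $\{z^\ast\in\Theta^\circ:\|z^\ast\|_\ast=1\}$. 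Then I substitute the explicit description of $D^\ast\mathcal{E}_{\mathcal{M}}(\overline{y},\overline{z})(z^\ast)$ from Proposition~\ref{coder-epi}: for $z^\ast\in\Theta^\circ$ it is precisely $\{y^\ast\mid\langle y^\ast,y'-\overline{y}\rangle\le\langle z^\ast,z'-\overline{z}\rangle\ \forall(y',z')\in\mathrm{gph}\mathcal{M}\}$, which yields \eqref{eq_subdif_convex} immediately.

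For the Lipschitz modulus formula \eqref{eq_lip_EM}, I would invoke \eqref{eq_lipM} (the Mordukhovich criterion) applied to the mapping $\mathcal{E}_{\mathcal{M}}$, whose graph $\mathrm{epi}\mathcal{M}$ is assumed locally closed around $(\overline{y},\overline{z})$ in finite dimensions. This gives $\mathrm{lip}\mathcal{E}_{\mathcal{M}}(\overline{y},\overline{z})=\sup\{\|y^\ast\|_\ast\mid y^\ast\in D^\ast\mathcal{E}_{\mathcal{M}}(\overline{y},\overline{z})(z^\ast),\ \|z^\ast\|_\ast=1\}$. Using Proposition~\ref{coder-epi}, the set $D^\ast\mathcal{E}_{\mathcal{M}}(\overline{y},\overline{z})(z^\ast)$ is empty whenever $z^\ast\notin\Theta^\circ$, so the supremum may be restricted to $z^\ast\in\Theta^\circ$ with $\|z^\ast\|_\ast=1$; comparing with \eqref{eq_subdif_convex}, that restricted supremum is exactly $\sup\{\|y^\ast\|_\ast\mid y^\ast\in\partial\mathcal{M}(\overline{y},\overline{z})\}$, giving \eqref{eq_lip_EM}.

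For the final inequality \eqref{eq_lipEmenorlip}, the idea is a monotonicity/inclusion argument at the level of coderivatives. Assuming now $(\overline{y},\overline{z})\in\mathrm{gph}\mathcal{M}$ with $\mathrm{gph}\mathcal{M}$ locally closed, I would show the inclusion $D^\ast\mathcal{E}_{\mathcal{M}}(\overline{y},\overline{z})(z^\ast)\subset D^\ast\mathcal{M}(\overline{y},\overline{z})(z^\ast)$ for every $z^\ast$ with $\|z^\ast\|_\ast=1$. For $z^\ast\notin\Theta^\circ$ the left side is empty by Proposition~\ref{coder-epi}, so there is nothing to prove; for $z^\ast\in\Theta^\circ$, any $y^\ast$ in the left side satisfies $\langle y^\ast,y'-\overline{y}\rangle\le\langle z^\ast,z'-\overline{z}\rangle$ for all $(y',z')\in\mathrm{gph}\mathcal{M}$, which by the convex coderivative description \eqref{eq_conderiv_convex} means precisely $y^\ast\in D^\ast\mathcal{M}(\overline{y},\overline{z})(z^\ast)$. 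Taking suprema of $\|y^\ast\|_\ast$ over $\|z^\ast\|_\ast=1$ on both sides and applying \eqref{eq_lipM} to both $\mathcal{E}_{\mathcal{M}}$ and $\mathcal{M}$ yields \eqref{eq_lipEmenorlip}. Here I should be careful about one point: \eqref{eq_lipM} requires $\mathrm{gph}\mathcal{M}$ to be locally closed around $(\overline{y},\overline{z})$, which is exactly the added hypothesis, and it also requires that the point lies in the graph, which is why this last clause is stated separately from the first two. The main obstacle — really the only subtlety — is making sure the convexity of $\mathrm{epi}\mathcal{M}$ does not by itself force convexity or closedness of $\mathrm{gph}\mathcal{M}$, so the local closedness of $\mathrm{gph}\mathcal{M}$ must genuinely be assumed rather than deduced; everything else is a direct application of the preceding results.
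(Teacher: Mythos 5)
Your treatment of \eqref{eq_subdif_convex} and \eqref{eq_lip_EM} is exactly the paper's proof: combine Definition~\ref{def fre subdiff} with Proposition~\ref{coder-epi} to get the subdifferential formula, then apply the Mordukhovich criterion \eqref{eq_lipM} to $\mathcal{E}_{\mathcal{M}}$ and restrict the supremum to $z^{\ast }\in \Theta ^{\circ }$ because the coderivative is empty otherwise. No issues with those two parts.

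For \eqref{eq_lipEmenorlip} your strategy---establish $D^{\ast }\mathcal{E}_{\mathcal{M}}\left( \overline{y},\overline{z}\right) \left( z^{\ast }\right) \subset D^{\ast }\mathcal{M}\left( \overline{y},\overline{z}\right) \left( z^{\ast }\right) $ and apply \eqref{eq_lipM} to both mappings---is also the paper's, but the step where you pass from the global inequality $\left\langle y^{\ast },y'-\overline{y}\right\rangle \leq \left\langle z^{\ast },z'-\overline{z}\right\rangle $ on $\mathrm{gph}\mathcal{M}$ to $y^{\ast }\in D^{\ast }\mathcal{M}\left( \overline{y},\overline{z}\right) \left( z^{\ast }\right) $ by citing \eqref{eq_conderiv_convex} is not justified: that equivalence is derived under the hypothesis that $\mathrm{gph}\mathcal{M}$ is convex, whereas the theorem assumes only that $\mathrm{epi}\mathcal{M}$ is convex---a point you yourself flag at the end of your argument, apparently without noticing that it undercuts this very step. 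The implication you need does hold, but for a different reason: the global inequality says that $\left( y^{\ast },-z^{\ast }\right) $ lies in the prenormal (regular normal) cone $\widehat{N}\left( \left( \overline{y},\overline{z}\right) ;\mathrm{gph}\mathcal{M}\right) $, and $\widehat{N}\subset N$ always, so $y^{\ast }$ belongs to the limiting coderivative of $\mathcal{M}$ with no convexity of the graph required. This is precisely how the paper argues, at the level of normal cones: convexity gives $N\left( \left( \overline{y},\overline{z}\right) ;\mathrm{epi}\mathcal{M}\right) =\widehat{N}\left( \left( \overline{y},\overline{z}\right) ;\mathrm{epi}\mathcal{M}\right) $, monotonicity of regular normals under the inclusion $\mathrm{gph}\mathcal{M}\subset \mathrm{epi}\mathcal{M}$ gives $\widehat{N}\left( \left( \overline{y},\overline{z}\right) ;\mathrm{epi}\mathcal{M}\right) \subset \widehat{N}\left( \left( \overline{y},\overline{z}\right) ;\mathrm{gph}\mathcal{M}\right) $, and the latter sits inside $N\left( \left( \overline{y},\overline{z}\right) ;\mathrm{gph}\mathcal{M}\right) $. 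With that repair your proof coincides with the paper's.
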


\begin{proof}
Representation \eqref{eq_subdif_convex} follows directly from definition %
\eqref{eq_def_subdif_gen} of the subdifferential $\partial\mathcal{M}\left(%
\overline{y},\overline{z}\right)$ combined with Proposition~\ref{coder-epi}.

Assuming now that the spaces $Y$ and $Z$ are finite-dimensional, applying
the Lipschitz modulus formula \eqref{eq_lipM} to the epigraphical mapping $%
\mathcal{E}_{\mathcal{M}}$, and appealing again to Proposition~\ref%
{coder-epi} tell us that 
\begin{eqnarray*}
\mathrm{lip}\mathcal{E}_{\mathcal{M}}\left( \overline{y},\overline{z}\right)
&=&\sup \big\{\Vert y^{\ast }\Vert _{\ast }\;\big|\;y^{\ast }\in D^{\ast }%
\mathcal{E}_{\mathcal{M}}\left( \overline{y},\overline{z}\right) (z^{\ast
}),\;\Vert z^{\ast }\Vert _{\ast }=1\big\} \\
&=&\sup \big\{\Vert y^{\ast }\Vert _{\ast }\;\big|\;y^{\ast }\in D^{\ast }%
\mathcal{E}_{\mathcal{M}}\left( \overline{y},\overline{z}\right) (z^{\ast
}),\;z^{\ast }\in \Theta ^{\circ },\;\Vert z^{\ast }\Vert _{\ast }=1\big\}.
\end{eqnarray*}%
Thus the claimed formula \eqref{eq_lip_EM} follows from the definition of $%
\partial \mathcal{M}\left( \overline{y},\overline{z}\right) $.

To verify finally the inequality \eqref{eq_lipEmenorlip}, denote by $%
\widehat{N}\left(\left(\overline{y},\overline{z}\right);\Omega\right)$ the
prenormal/regular normal cone to $\Omega\subset Y\times Z$ at $\left(%
\overline{y},\overline{z}\right)$ (see, e.g., \cite{Mor06i,rw}) and using
the convexity of $\mathrm{epi\mathcal{M}}$, we get 
\begin{equation*}
N\left(\left(\overline{y},\overline{z}\right);\mathrm{epi}\mathcal{M}\right)=%
\widehat{N}\left(\left(\overline{y},\overline{z}\right);\mathrm{epi}\mathcal{%
M}\right) \subset\widehat{N}\left(\left(\overline{y}, \overline{z}\right);%
\mathrm{gph}\mathcal{M}\right)=N\left(\left(\overline{y},\overline{z}\right);%
\mathrm{gph}\mathcal{M}\right),
\end{equation*}
where the inclusion comes from \cite[Proposition~1.5]{Mor06i}. This gives us 
\begin{equation*}
D^{\ast}\mathcal{E}_{\mathcal{M}}\left(\overline{y},\overline{z}%
\right)\left(z\right)\subset D^{\ast}\mathcal{M}\left(\overline{y},\overline{%
z}\right)\left(z\right)\;\text{ for all }\;z\in Z
\end{equation*}
and thus deduces \eqref{eq_lipEmenorlip} from the basic coderivative formula %
\eqref{eq_lipM}.
\end{proof}

\begin{rem}
\label{Rem_lip_EM_M}\textrm{The inequality in \eqref{eq_lipEmenorlip} may be 
\emph{strict} as illustrated by the following simple example. Consider $%
\Theta:=\mathbb{R}_{+}$ and $\mathcal{M}\colon\mathbb{R}\rightrightarrows%
\mathbb{R}$ given by $\mathcal{M}(y):=[y,2y]$ if $y\ge 0$ and $\mathcal{M}%
(y):=\{0\}$ if $y<0$. Then it is easy to calculate that 
\begin{equation*}
1=\mathrm{lip}\mathcal{E}_{\mathcal{M}}\left( 0,0\right)<\mathrm{lip}%
\mathcal{M}\left(0,0\right)=2.
\end{equation*}%
}
\end{rem}

\section{Stability Analysis of Epigraphical Feasible Sets}

\label{feas}

The underlying goal of this section is explicit computing the Lipschitz
modulus of epigraphical feasible set mapping associated with the
parameterized MLP problem \eqref{eq_MOP}. As we know from Sections~\ref%
{intro} and \ref{prelim}, our approach reduces this computation to deriving
a verifiable formula to calculate the subdifferential in the sense of
Definition~\ref{def fre subdiff} of the perturbed feasible set $\mathcal{F}$
in terms of its given data. Proceeding in this way, we concentrate here on
obtaining the representations of the subdifferential and Lipschitz modulus
with involving the graph of the nondominated solution mapping $\mathcal{S}$.

Let us begin with two lemmas. The first one is a well-known result that
gives a characterization of nondominated solutions to MLP$\left(b\right)$
via optimal solutions to a scalarized linear program. We formulate it
without a proof. The second lemma is a new result, which plays a key role
throughout the paper.

\begin{lem}
\label{Lem_optimality_conditions} Let $x_{0}\in\mathcal{F}\left(b\right)$
for some $b\in\mathbb{R}^{T}$. Then the following are equivalent:

\textbf{(i)} $x_{0}\in\mathcal{S}\left(b\right)$.

\textbf{(ii)} There exist numbers $\lambda _{i}>0$ for $i=1,...,q,$ such
that 
\begin{equation*}
x_{0}\in\arg\min \left\{\dsum_{i=1}^{q}\lambda _{i}\left\langle
c_{i},x\right\rangle\Big|\;x\in\mathcal{F}\left(b\right)\right\}.
\end{equation*}
\end{lem}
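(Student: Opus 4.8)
The plan is to establish the two implications separately; \textbf{(ii)}$\Rightarrow$\textbf{(i)} is elementary, while all the content of \textbf{(i)}$\Rightarrow$\textbf{(ii)} rests on the polyhedrality of $\mathcal{F}(b)$. For \textbf{(ii)}$\Rightarrow$\textbf{(i)}, I would argue by contradiction: if $x_{0}$ minimizes $\sum_{i=1}^{q}\lambda_{i}\langle c_{i},x\rangle$ over $\mathcal{F}(b)$ with all $\lambda_{i}>0$, and if some $y\in\mathcal{F}(b)$ dominated $x_{0}$, i.e.\ $\langle c_{i},y\rangle\le\langle c_{i},x_{0}\rangle$ for all $i$ with a strict inequality for some $i_{0}$, then $\sum_{i=1}^{q}\lambda_{i}\langle c_{i},y\rangle<\sum_{i=1}^{q}\lambda_{i}\langle c_{i},x_{0}\rangle$, contradicting the optimality of $x_{0}$. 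Hence $x_{0}\in\mathcal{S}(b)$.

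For \textbf{(i)}$\Rightarrow$\textbf{(ii)}, I would pass to the image space. Set $D:=\mathcal{C}(\mathcal{F}(b)-x_{0})\subset\mathbb{R}^{q}$, which is a polyhedral convex set with $0_{q}\in D$, and rewrite \textbf{(i)} via the equivalence recalled in the Introduction as $D\cap(-\mathbb{R}_{+}^{q})=\{0_{q}\}$. Introduce the cone $C:=\mathrm{cone}\,(D+\mathbb{R}_{+}^{q})$. Since $D+\mathbb{R}_{+}^{q}$ is a polyhedron containing the origin, its conic hull is finitely generated by Minkowski--Weyl, so $C$ is a \emph{closed} polyhedral convex cone and $C^{\circ\circ}=C$. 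From $0_{q}\in D$ we get $\mathbb{R}_{+}^{q}\subset D+\mathbb{R}_{+}^{q}\subset C$, and a short check gives $C\cap(-\mathbb{R}_{+}^{q})=\{0_{q}\}$: if $v=t(d+u)\le 0$ with $t>0$, $d\in D$, $u\in\mathbb{R}_{+}^{q}$, then $d\le -u\le 0$, so $d\in D\cap(-\mathbb{R}_{+}^{q})=\{0_{q}\}$, whence $v=tu\ge 0$ and $v=0_{q}$.

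The goal is then to find $\lambda\in C^{\circ}$ with all components strictly positive: such a $\lambda$ satisfies $\langle\lambda,d\rangle\ge 0$ for every $d\in D\subset C$, i.e.\ $\sum_{i=1}^{q}\lambda_{i}\langle c_{i},x\rangle\ge\sum_{i=1}^{q}\lambda_{i}\langle c_{i},x_{0}\rangle$ for all $x\in\mathcal{F}(b)$, which is precisely \textbf{(ii)}. Since $\mathbb{R}_{+}^{q}\subset C$ implies $C^{\circ}\subset\mathbb{R}_{+}^{q}$, it suffices to show that $C^{\circ}$ is not contained in the boundary $\bigcup_{i=1}^{q}\{\lambda\mid\lambda_{i}=0\}$ of $\mathbb{R}_{+}^{q}$. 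If it were, then, being convex and covered by finitely many hyperplanes, $C^{\circ}$ would lie in a single one, say $C^{\circ}\subset e_{i}^{\perp}$; taking polars and using $C^{\circ\circ}=C$ would give $C\supset(e_{i}^{\perp})^{\circ}=\mathbb{R}e_{i}\ni -e_{i}$, contradicting $C\cap(-\mathbb{R}_{+}^{q})=\{0_{q}\}$. This yields the required strictly positive multipliers and completes the proof.

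The step I expect to need the most care is the reduction to the \emph{closed} polyhedral cone $C$: one must be certain that $\mathrm{cone}\,(D+\mathbb{R}_{+}^{q})$ is closed — which holds exactly because $D+\mathbb{R}_{+}^{q}$ is polyhedral and contains $0_{q}$, so its conic hull is finitely generated — since the polarity identity $C^{\circ\circ}=C$ is what drives the argument. Conceptually this is also where polyhedrality is indispensable: for general convex multiobjective problems an efficient point need not be recoverable by scalarization with \emph{strictly} positive weights, and the argument above fails without the finite generation of $C$. The auxiliary fact that a convex set covered by finitely many hyperplanes is contained in one of them is standard and used only here.
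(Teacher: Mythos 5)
The paper does not prove this lemma: it is stated explicitly as ``a well-known result \dots\ We formulate it without a proof,'' so there is no argument in the text to compare yours against. Your proof is correct and self-contained. The implication \textbf{(ii)}$\Rightarrow$\textbf{(i)} is the standard one-line contradiction, and your \textbf{(i)}$\Rightarrow$\textbf{(ii)} is essentially the classical polarity proof of the scalarization theorem for linear vector optimization (Isermann's theorem): pass to $D=\mathcal{C}(\mathcal{F}(b)-x_{0})$, form the polyhedral cone $C=\mathrm{cone}(D+\mathbb{R}_{+}^{q})$, verify $C\cap(-\mathbb{R}_{+}^{q})=\{0_{q}\}$, and extract a strictly positive element of $C^{\circ}$ via the bipolar identity. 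You correctly isolate the two places where polyhedrality is indispensable: the closedness of $C$ (so that $C^{\circ\circ}=C$), which you justify by noting that the conic hull of a polyhedron containing the origin is finitely generated, and the final step where the convex set $C^{\circ}\subset\mathbb{R}_{+}^{q}$, if it avoided the open orthant, would lie in a single coordinate hyperplane $e_{i}^{\perp}$, forcing $-e_{i}\in C$ and a contradiction. All intermediate claims check out, including that $\mathcal{C}(\mathcal{F}(b)-x_{0})$ is polyhedral (image of a polyhedron under a linear map) and that $(e_{i}^{\perp})^{\circ}=\mathbb{R}e_{i}$ for the positive polar used in the paper. This is a perfectly acceptable way to supply the omitted proof; an alternative, equally standard route would invoke LP duality for the auxiliary program $\min\{\sum_{i}\langle c_{i},x-x_{0}\rangle\mid x\in\mathcal{F}(b),\ \langle c_{i},x\rangle\le\langle c_{i},x_{0}\rangle\}$, but nothing in your version needs repair.
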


\vspace*{0.05in}

To formulate the second lemma, recall that 
\begin{equation*}
\mathrm{dom}\mathcal{S}:=\big\{b\in \mathbb{R}^{T}\big|\;\mathcal{S}%
\left(b\right)\ne\emptyset\big\}.
\end{equation*}

\begin{lem}
\label{Lem_F_notS} Let $b\in\mathrm{dom}\mathcal{S}$. Then for any $x_{0}\in%
\mathcal{F}\left( b\right)\setminus \mathcal{S}\left(b\right)$ there exists $%
\widetilde{x}_{0}\in\mathcal{S}\left(b\right)$ such that $\left\langle c_{i},%
\widetilde{x}_{0}\right\rangle\le\left\langle c_{i},x_{0}\right\rangle$
whenever $i=1,...,q$.
\end{lem}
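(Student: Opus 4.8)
The plan is to argue that among all feasible points that dominate $x_0$, one can be chosen that is itself nondominated, and the natural vehicle for this is a compactness/closedness argument combined with the scalarization in Lemma~\ref{Lem_optimality_conditions}. First I would introduce the set $D(x_0):=\{x\in\mathcal{F}(b)\mid \langle c_i,x\rangle\le\langle c_i,x_0\rangle,\ i=1,\dots,q\}$ of feasible points dominating (weakly) $x_0$; this is a nonempty (it contains $x_0$) closed convex polyhedron. The goal is to produce $\widetilde{x}_0\in D(x_0)\cap\mathcal{S}(b)$. Since $b\in\mathrm{dom}\,\mathcal{S}$, fix some $\widehat{x}\in\mathcal{S}(b)$, which by Lemma~\ref{Lem_optimality_conditions}(ii) minimizes $\sum_{i=1}^q\lambda_i\langle c_i,x\rangle$ over $\mathcal{F}(b)$ for some strictly positive weights $\lambda_i$; let $\phi(x):=\sum_{i=1}^q\lambda_i\langle c_i,x\rangle$.

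Next I would minimize this same $\phi$ over the smaller set $D(x_0)$. Because $\phi$ is bounded below on $\mathcal{F}(b)\supset D(x_0)$ (its infimum over $\mathcal{F}(b)$ is attained at $\widehat{x}$) and $D(x_0)$ is a nonempty polyhedron, the linear program $\min\{\phi(x)\mid x\in D(x_0)\}$ attains its optimum at some $\widetilde{x}_0\in D(x_0)$ (a bounded-below linear function on a nonempty polyhedron attains its infimum). By construction $\widetilde{x}_0\in\mathcal{F}(b)$ and $\langle c_i,\widetilde{x}_0\rangle\le\langle c_i,x_0\rangle$ for all $i$, so it remains only to show $\widetilde{x}_0\in\mathcal{S}(b)$.

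For that final step I would show $\widetilde{x}_0$ in fact minimizes $\phi$ over all of $\mathcal{F}(b)$, whence Lemma~\ref{Lem_optimality_conditions}(ii) (with the positive weights $\lambda_i$) yields $\widetilde{x}_0\in\mathcal{S}(b)$. Suppose not: then there is $y\in\mathcal{F}(b)$ with $\phi(y)<\phi(\widetilde{x}_0)$. Since also $\phi(y)\ge\phi(\widehat{x})=\min_{\mathcal{F}(b)}\phi$, a convexity argument helps — but the cleanest route is to observe that $\widehat{x}\in D(x_0)$ need not hold, so instead I would compare $y$ with $\widetilde{x}_0$ directly: if $\phi(y)<\phi(\widetilde{x}_0)$ then $y\notin D(x_0)$ (else $\widetilde{x}_0$ would not be $\phi$-optimal on $D(x_0)$), so $\langle c_{i_0},y\rangle>\langle c_{i_0},x_0\rangle\ge\langle c_{i_0},\widetilde{x}_0\rangle$ for some index $i_0$; I would then move from $\widetilde{x}_0$ a small step toward $y$ along the segment $[\widetilde{x}_0,y]\subset\mathcal{F}(b)$, which strictly decreases $\phi$ while, for $t>0$ small, keeping all coordinates $\langle c_i,\cdot\rangle$ below $\langle c_i,x_0\rangle$ except possibly those already strictly below — contradicting $\phi$-optimality of $\widetilde{x}_0$ on $D(x_0)$ unless that small step stays in $D(x_0)$, in which case we contradict optimality directly. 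The main obstacle is handling this boundary bookkeeping cleanly: the step toward $y$ may immediately violate a constraint $\langle c_i,x\rangle\le\langle c_i,x_0\rangle$ that is active at $\widetilde{x}_0$, so one must argue that along any such active direction $\langle c_i,y-\widetilde{x}_0\rangle\le 0$ cannot fail simultaneously for the $\phi$-decreasing direction — equivalently, that optimality of $\widetilde{x}_0$ on the polyhedron $D(x_0)$ already forces $\langle c_i,y-\widetilde{x}_0\rangle\ge 0$ on active rows whenever $\phi(y)<\phi(\widetilde{x}_0)$, which contradicts $\phi$ being a strictly positive combination of the $\langle c_i,\cdot\rangle$. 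Writing this via the KKT/duality conditions for the LP over $D(x_0)$ is the most economical way to close the gap.
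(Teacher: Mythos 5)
Your construction---minimize a fixed positive scalarization $\phi(x)=\sum_{i=1}^{q}\lambda_{i}\left\langle c_{i},x\right\rangle$ over the dominance polyhedron $D(x_{0})$---is sound and would in fact yield a shorter proof than the paper's, which instead runs a $q$-step sequential procedure (minimize $\left\langle c_{1},\cdot\right\rangle$ over the points dominating $x_{0}$, then $\left\langle c_{2},\cdot\right\rangle$ over the points dominating $x_{1}$, and so on, showing that the iterate reached at Step~$q$ must be nondominated). Your existence argument for the minimizer $\widetilde{x}_{0}$ is also fine: $\phi$ is bounded below on $\mathcal{F}(b)\supset D(x_{0})$ by Lemma~\ref{Lem_optimality_conditions}, and a linear function bounded below on a nonempty polyhedron attains its infimum; the paper establishes the same attainment by an explicit unbounded-sequence contradiction.

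The genuine gap is in your last step. The intermediate claim you try to establish---that $\widetilde{x}_{0}$ minimizes $\phi$ over all of $\mathcal{F}(b)$---is simply false in general, which is why the ``boundary bookkeeping'' you describe never closes. For instance, take $q=2$, $c_{1}=(1,0)$, $c_{2}=(0,1)$, $\mathcal{F}(b)=\{x\in\mathbb{R}_{+}^{2}\mid x_{1}+2x_{2}\geq 2\}$, $\lambda_{1}=\lambda_{2}=1$, and $x_{0}=(3,1/2)$: then the minimum of $\phi$ over $D(x_{0})$ equals $3/2$, attained at $\widetilde{x}_{0}=(1,1/2)\in\mathcal{S}(b)$, whereas the minimum of $\phi$ over $\mathcal{F}(b)$ equals $1$, attained at $(0,1)$. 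Fortunately you do not need that claim at all; the correct finish is one line. If $\widetilde{x}_{0}\notin\mathcal{S}(b)$, pick $w\in\mathcal{F}(b)$ with $\left\langle c_{i},w\right\rangle\leq\left\langle c_{i},\widetilde{x}_{0}\right\rangle$ for all $i$ and strict inequality for some $i_{0}$. Since $\widetilde{x}_{0}\in D(x_{0})$, these inequalities chain to $\left\langle c_{i},w\right\rangle\leq\left\langle c_{i},x_{0}\right\rangle$ for all $i$, so $w\in D(x_{0})$; and strict positivity of the weights gives $\phi(w)<\phi(\widetilde{x}_{0})$, contradicting the optimality of $\widetilde{x}_{0}$ on $D(x_{0})$. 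With that replacement your argument is complete and genuinely different from (indeed cleaner than) the paper's; as written, however, the final step rests on a false assertion and the KKT/duality repair you sketch cannot succeed.
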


\begin{proof}
Fix $x_{0}\in\mathcal{F}\left(b\right)\setminus\mathcal{S}\left(b\right)$
and proceed step-by-step as follows:\vspace*{0.05in}

\emph{Step}~1. Let us proof the existence of solutions to the linear
program: 
\begin{equation}
x_{1}\in \arg \min \big\{\left\langle c_{1},x\right\rangle \big|\;x\in 
\mathcal{F}\left( b\right) ,\text{ }\left\langle c_{i},x\right\rangle \leq
\left\langle c_{i},x_{0}\right\rangle ,\text{ }i=1,...,q\big\}.
\label{eq_001}
\end{equation}%
Arguing by contradiction, suppose that \eqref{eq_001} has no optimal
solutions. Since $x_{0}$ is a feasible solution to \eqref{eq_001}, our
assumption is equivalent to the unboundedness of the set of feasible
solutions to the linear program \eqref{eq_001}. Thus there exists a sequence 
$\{w_{r}\}_{r\in \mathbb{N}}\subset \mathbb{R}^{n}$ such that 
\begin{equation*}
w_{r}\in \mathcal{F}\left( b\right) ,\text{ }\left\langle
c_{i},w_{r}\right\rangle \leq \left\langle c_{i},x_{0}\right\rangle ,\text{ }%
i=1,...,q,\text{ for all }r\in \mathbb{N},
\end{equation*}%
while we have the infinite limit 
\begin{equation*}
\lim_{r\rightarrow \infty }\left\langle c_{1},w_{r}\right\rangle =-\infty .
\end{equation*}%
Remembering that $\mathcal{S}\left( b\right) \neq \emptyset $, pick any $%
\widetilde{x}\in \mathcal{S}\left( b\right) $ and find by Lemma~\ref%
{Lem_optimality_conditions} numbers $\lambda _{i}>0$ with $i=1,...,q$ such
that 
\begin{equation*}
\widetilde{x}\in \arg \min \left\{ \dsum_{i=1}^{q}\lambda _{i}\left\langle
c_{i},x\right\rangle \Big|\;x\in \mathcal{F}\left( b\right) \right\} .
\end{equation*}%
This readily brings us to the contradiction: 
\begin{equation*}
\dsum_{i=1}^{q}\lambda _{i}\left\langle c_{i},\widetilde{x}\right\rangle
\leq \dsum_{i=1}^{q}\lambda _{i}\left\langle c_{i},w_{r}\right\rangle \leq
\lambda _{1}\left\langle c_{1},w_{r}\right\rangle +\dsum_{i=2}^{q}\lambda
_{i}\left\langle c_{i},x_{0}\right\rangle \underset{r\rightarrow \infty }{%
\longrightarrow }-\infty ,
\end{equation*}%
which therefore verifies the existence of the solution $x_{1}$ to %
\eqref{eq_001}. Note furthermore that if $x_{1}$ satisfies $x_{1}\in 
\mathcal{S}\left( b\right) $, then the proof of the lemma is complete.
Otherwise we go to the next step as follows.\vspace*{0.05in}

\emph{Step}~2. Suppose that $x_{1}\in\mathcal{F}\left(b\right)\setminus%
\mathcal{S}\left(b\right)$. Then arguing as in Step~1 ensures the existence
of a vector $x_{2}\in{\mathbb{R}}^n$ satisfying 
\begin{equation}  \label{eq_002}
x_{2}\in \arg\min\big\{\left\langle c_{2},x\right\rangle\big|\;x\in\mathcal{F%
}\left(b\right),\text{ }\left\langle c_{i},x\right\rangle\le\left\langle
c_{i},x_{1}\right\rangle,\text{ }i=1,...,q\big\}.
\end{equation}
As before, the proof of the lemma is finished if $x_{2}\in\mathcal{S}%
\left(b\right)$. Otherwise we go to \emph{Step}~3 and proceed similarly.

Reaching in this way \emph{Step}~$j$ with some $j<q$, we either finish the
proof, or arrive at \emph{Step}~$q$ that is described below.\vspace*{0.05in}

\emph{Step}~$q$. Suppose that $x_{q-1}\in\mathcal{F}\left(b\right)\setminus%
\mathcal{S}\left(b\right)$. Again we get 
\begin{equation*}
x_{q}\in\arg\min\big\{\left\langle c_{q},x\right\rangle\big|\;x\in\mathcal{F}%
\left(b\right),\text{ }\left\langle c_{i},x\right\rangle\le\left\langle
c_{i},x_{q-1}\right\rangle,\text{ }i=1,...,q\big\}.
\end{equation*}

Let us show that now we do not have any choice but $x_{q}\in\mathcal{S}%
\left(b\right)$. Arguing by contradiction, assume that there exists $w\in%
\mathcal{F}\left(b\right)$ such that 
\begin{equation*}
\left\{%
\begin{array}{l}
\left\langle c_{i},w\right\rangle\le\left\langle c_{i},x_{q}\right\rangle%
\text{ for all }\;i=1,...,q, \\ 
\left\langle c_{j},w\right\rangle<\left\langle c_{j},x_{q}\right\rangle\text{
for some }\;j\in\{1,...,q\}.%
\end{array}%
\right.
\end{equation*}
Then we arrive at a contradiction with the choice of $x_{j}$. Indeed, it
follows that 
\begin{eqnarray*}
\left\langle c_{i},w\right\rangle &\le&\left\langle
c_{i},x_{q}\right\rangle\le\left\langle c_{i},x_{q-1}\right\rangle\le
...\le\left\langle c_{i},x_{j-1}\right\rangle\text{ for all }i=1,...,q, \\
\left\langle c_{j},w\right\rangle&<&\left\langle
c_{j},x_{q}\right\rangle\le\left\langle
c_{j},x_{q-1}\right\rangle\le...\le\left\langle c_{j},x_{j}\right\rangle.
\end{eqnarray*}
This completes the proof of the lemma.\vspace*{0.05in}
\end{proof}

The next theorem provides a description of the subdifferential $\partial%
\mathcal{F}\left(\overline{b},\overline{x}\right)$ in terms of $\mathrm{gph}%
\mathcal{S}$ (instead of $\mathrm{gph}\mathcal{F}$ as in the definition),
which eventually allows us to relate the subdifferential $\partial\mathcal{F}%
\left(\overline{b},\overline{x}\right)$ to the subdifferential of the Pareto
front mapping \eqref{eq_pareto}. This leads us to new results even in the
case of standard linear programs as shown in Section~\ref{lp}.

\begin{rem}
\label{Rem_C} \emph{Using the notation of Section~\ref{prelim} gives us} 
\begin{equation*}
\mathrm{\mathcal{E}_{\mathcal{F}}}\left(b\right)=\mathcal{F}%
\left(b\right)+\Theta\;\;\mbox{\rm for all }\;b\in\mathbb{R}^{T}\;\;%
\mbox{\rm with }\;\Theta:=\big\{c_{1},...,c_{q}\big\}^\circ.
\end{equation*}
\emph{From now on we denote} 
\begin{equation*}
C:=-N\left(0_{n};\Theta\right)=\Theta^{\circ}=\mathrm{cone}\big\{%
c_{1},...c_{q}\big\},
\end{equation*}
\emph{where the last equality immediately follows from the classical Farkas
Lemma.}
\end{rem}

Here is the aforementioned theorem with the subdifferential calculation. In
the paper, and despite $\mathbb{R}^{n}$ is self-dual, we are using $%
\left\Vert c\right\Vert _{\ast }$ and $\left\Vert a_{t}\right\Vert _{\ast }$
because $c$ and $a_{t}$ are regarded as linear functions ($x\mapsto
\left\langle c,x\right\rangle $ and $x\mapsto \left\langle
a_{t},x\right\rangle $, respectively).\textbf{\ }

\begin{theo}
\label{Theo_subdiffF} Let $\left( \overline{b},\overline{x}\right) \in 
\mathrm{gph\mathcal{E}_{\mathcal{F}}}$. Then we have the subdifferential
formula 
\begin{equation}
\partial \mathcal{F}\left( \overline{b},\overline{x}\right) =\dbigcup_{%
\QATOP{\scriptstyle{c\in C}}{\scriptstyle{\Vert c\Vert }_{\ast }{=1}}}\big\{%
y\in \mathbb{R}^{T}\big|\left\langle y,b-\overline{b}\right\rangle \leq
\left\langle c,x-\overline{x}\right\rangle \;\text{for all}\;\left(
b,x\right) \in \mathrm{gph}\mathcal{S}\big\}.  \label{eq_subdiffF}
\end{equation}
\end{theo}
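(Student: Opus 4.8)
The starting point is the general subdifferential representation \eqref{eq_subdif_convex} from Theorem~\ref{Cor_lip_subdiff}, applied to $\mathcal{M}=\mathcal{F}$ with ordering cone $\Theta=\{c_1,\dots,c_q\}^\circ$, so that (by Remark~\ref{Rem_C}) $\Theta^\circ=C=\mathrm{cone}\{c_1,\dots,c_q\}$. This immediately yields
\[
\partial\mathcal{F}\left(\overline{b},\overline{x}\right)=\bigcup_{\substack{c\in C}{\|c\|_\ast=1}}\big\{y\in\mathbb{R}^T\mid\langle y,b-\overline{b}\rangle\le\langle c,x-\overline{x}\rangle\ \forall(b,x)\in\mathrm{gph}\mathcal{F}\big\}.
\]
Thus the entire content of the theorem is the claim that inside each member of the union one may replace $\mathrm{gph}\mathcal{F}$ by $\mathrm{gph}\mathcal{S}$. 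Since $\mathrm{gph}\mathcal{S}\subset\mathrm{gph}\mathcal{F}$, the inclusion ``$\subseteq$'' in \eqref{eq_subdiffF} is trivial: a $y$ satisfying the inequality over all of $\mathrm{gph}\mathcal{F}$ a fortiori satisfies it over $\mathrm{gph}\mathcal{S}$. So the real work is the reverse inclusion ``$\supseteq$''.

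For ``$\supseteq$'', fix $c\in C$ with $\|c\|_\ast=1$ and a $y\in\mathbb{R}^T$ with $\langle y,b-\overline{b}\rangle\le\langle c,x-\overline{x}\rangle$ for all $(b,x)\in\mathrm{gph}\mathcal{S}$; I must show the same inequality holds for every $(b,x)\in\mathrm{gph}\mathcal{F}$. The plan is to take an arbitrary $(b,x)\in\mathrm{gph}\mathcal{F}$ and produce a point of $\mathrm{gph}\mathcal{S}$ that "dominates" it in the relevant sense. First note $b\in\mathrm{dom}\mathcal{S}$: indeed $b\in\mathrm{dom}\mathcal{F}$, and since $c\in C=\mathrm{cone}\{c_1,\dots,c_q\}$ one has $c=\sum_i\lambda_i c_i$ with $\lambda_i\ge0$; one shows $\mathcal{S}(b)\neq\emptyset$ because otherwise the scalarized program $\min\{\sum\lambda_ic_i^\top x\mid x\in\mathcal{F}(b)\}$ with strictly positive multipliers would be unbounded below — and a small perturbation of the multipliers keeps strict positivity while, via Lemma~\ref{Lem_optimality_conditions}, any nondominated point of the nominal system $\mathcal{F}(\overline b)$ (nonempty since $\overline b\in\mathrm{dom}\mathcal{S}$, as $\mathrm{gph}\mathcal E_{\mathcal F}$ being nonempty forces this) together with feasibility arguments as in Lemma~\ref{Lem_F_notS}, Step~1, gives a contradiction. (If $\mathcal{S}(b)=\emptyset$ for some $b$ in $\mathrm{dom}\mathcal F$, then $\partial\mathcal F$ could only be affected through inequalities that are automatically satisfied; this edge case must be handled or ruled out carefully.) Granting $b\in\mathrm{dom}\mathcal{S}$: if $x\in\mathcal{S}(b)$ there is nothing to prove; if $x\in\mathcal{F}(b)\setminus\mathcal{S}(b)$, apply Lemma~\ref{Lem_F_notS} to obtain $\widetilde x\in\mathcal{S}(b)$ with $\langle c_i,\widetilde x\rangle\le\langle c_i,x\rangle$ for all $i$. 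Then $(b,\widetilde x)\in\mathrm{gph}\mathcal{S}$, so by hypothesis $\langle y,b-\overline b\rangle\le\langle c,\widetilde x-\overline x\rangle$. Since $c=\sum_i\lambda_ic_i$ with $\lambda_i\ge0$, the coordinatewise inequality $\langle c_i,\widetilde x\rangle\le\langle c_i,x\rangle$ gives $\langle c,\widetilde x\rangle\le\langle c,x\rangle$, hence $\langle c,\widetilde x-\overline x\rangle\le\langle c,x-\overline x\rangle$, and the desired inequality $\langle y,b-\overline b\rangle\le\langle c,x-\overline x\rangle$ follows.

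The main obstacle I anticipate is precisely the nonemptiness of $\mathcal{S}(b)$ for the relevant parameters $b$ — i.e., making sure that whenever the defining inequality is tested at some $(b,x)\in\mathrm{gph}\mathcal{F}$, we really do have a nondominated point available to invoke Lemma~\ref{Lem_F_notS}. One clean way to organize this is to observe that it suffices to verify the inequality $\langle y,b-\overline b\rangle\le\langle c,x-\overline x\rangle$ on a dense or "sufficient" subset of $\mathrm{gph}\mathcal{F}$, or alternatively to exploit that $c\in C$ forces a direction of recession control: if $\mathcal{S}(b)=\emptyset$ while $\mathcal{F}(b)\ne\emptyset$, then along a recession ray the value $\langle c,x\rangle=\sum\lambda_i\langle c_i,x\rangle$ must tend to $-\infty$, and by a separation/continuity argument on $\mathrm{dom}\mathcal{S}$ (which is convex, being the image of the convex $\mathrm{gph}\mathcal S$) one can push the test point back into parameters admitting nondominated solutions. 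Once this technical point is secured, the rest is the short dominance argument above. The remaining routine steps — convexity of $\mathrm{epi}\mathcal{F}$ so that Theorem~\ref{Cor_lip_subdiff} applies, and the identification $\Theta^\circ=C$ from Remark~\ref{Rem_C} — are already in place.
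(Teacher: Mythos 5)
Your proof is correct and follows essentially the same route as the paper's: reduce via the convex-graph formula \eqref{eq_subdif_convex} to the inclusion ``$\supseteq$'', then pass from an arbitrary $(b,x)\in\mathrm{gph}\mathcal{F}$ to a dominating point $\widetilde{x}\in\mathcal{S}(b)$ via Lemma~\ref{Lem_F_notS}, using that $c\in\mathrm{cone}\{c_1,\dots,c_q\}$ turns the componentwise inequalities into $\left\langle c,\widetilde{x}\right\rangle\le\left\langle c,x\right\rangle$ (the paper phrases this by contradiction, you argue directly, which is immaterial). The only difference is that you explicitly flag the need for $b\in\mathrm{dom}\mathcal{S}$ before invoking Lemma~\ref{Lem_F_notS}; the paper applies that lemma without comment, so your caution is warranted rather than a divergence of method.
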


\begin{proof}
By the convexity of the sets $\mathrm{gph}\mathcal{F}$ and $\mathrm{gph}%
\mathcal{E}_{\mathcal{F}}$ we get from \eqref{eq_subdif_convex} that 
\begin{equation*}
\partial \mathcal{F}\left( \overline{b},\overline{x}\right) =\dbigcup_{%
\QATOP{\scriptstyle{c\in C}}{\scriptstyle{\Vert c\Vert }_{\ast }{=1}}}\big\{%
y\in \mathbb{R}^{T}\big|\;\left\langle y,b-\overline{b}\right\rangle \leq
\left\langle c,x-\overline{x}\right\rangle \;\text{for all }\;\left(
b,x\right) \in \mathrm{gph}\mathcal{F}\big\}.
\end{equation*}%
Since $\mathrm{gph}\mathcal{S\subset }\mathrm{gph}\mathcal{F}$, we only need
to verify the inclusion `$\supset $' of \eqref{eq_subdiffF}.

To proceed, pick any $c\in C$ with ${\Vert c\Vert }_{\ast }=1$ and select $%
y\in \mathbb{R}^{T}$ such that 
\begin{equation}
\left\langle y,b-\overline{b}\right\rangle \leq \left\langle c,x-\overline{x}%
\right\rangle \;\text{for all }\left( b,x\right) \in \mathrm{gph}\mathcal{S}.
\label{eq_inequalities}
\end{equation}%
Arguing by contradiction, suppose that there exists $\left(
b_{0},x_{0}\right) \in \mathrm{gph}\mathcal{F}$ with 
\begin{equation*}
\left\langle y,b_{0}-\overline{b}\right\rangle >\left\langle c,x_{0}-%
\overline{x}\right\rangle ,
\end{equation*}%
which yields $\left( b_{0},x_{0}\right) \notin \mathrm{gph}\mathcal{S}$.
Applying then Lemma~\ref{Lem_F_notS} to $\left( b_{0},x_{0}\right) $ ensures
the existence of $\widetilde{x}_{0}\in \mathcal{S}\left( b_{0}\right) $ such
that $\left\langle c_{i},\widetilde{x}_{0}\right\rangle \leq \left\langle
c_{i},x_{0}\right\rangle $ for all $i=1,...,q$. In particular, we get $%
\left\langle c,\widetilde{x}_{0}\right\rangle \leq \left\langle
c,x_{0}\right\rangle $. Therefore 
\begin{equation*}
\left\langle y,b_{0}-\overline{b}\right\rangle >\left\langle c,x_{0}-%
\overline{x}\right\rangle \geq \left\langle c,\widetilde{x}_{0}-\overline{x}%
\right\rangle ,
\end{equation*}%
which contradicts \eqref{eq_inequalities} and thus completes the proof of
the theorem.
\end{proof}

\vspace*{0.05in}

Now we are ready to establish a precise formula for computing the Lipschitz
modulus at $\left(\overline{b},\overline{x}\right)\in\mathrm{gph\mathcal{E}_{%
\mathcal{F}}}$ of the epigraphical feasible set mapping from 
\eqref{eq_epi
mapp}. In the next theorem we employ the $l_{1}$-norm $\left\Vert\cdot\right%
\Vert_{1}$ on $\mathbb{R}^{T}$, which is dual to the primal supremum norm %
\eqref{sup-norm} used above.

\begin{theo}
\label{Cor_lip_EF} Let $\left( \overline{b},\overline{x}\right) \in \mathrm{%
gph\mathcal{E}_{\mathcal{F}}}$. Then we have 
\begin{eqnarray*}
\mathrm{lip}\mathcal{E}_{\mathcal{F}}\left( \overline{b},\overline{x}\right)
&=&\sup \big\{\left\Vert y\right\Vert _{1}\big|\;y\in \partial \mathcal{F}%
\left( \overline{b},\overline{x}\right) \big\} \\
&=&\sup \big\{\dbigcup_{\QATOP{\scriptstyle{c\in C}}{\scriptstyle{\Vert
c\Vert }_{\ast }{=1}}}\big\{\left\Vert y\right\Vert _{1}\big|\;\left\langle
y,b-\overline{b}\right\rangle \leq \left\langle c,x-\overline{x}%
\right\rangle \text{ }\forall \left( b,x\right) \in \mathrm{gph}\mathcal{S}%
\big\},
\end{eqnarray*}%
and thus the multifunction $\mathcal{E}_{\mathcal{F}}$ is Lipschitz-like
around $\left( \overline{b},\overline{x}\right) $ if and only if 
\begin{equation*}
\sup \big\{\dbigcup_{\QATOP{\scriptstyle{c\in C}}{\scriptstyle{\Vert c\Vert }%
_{\ast }{=1}}}\Big\{\left\Vert y\right\Vert _{1}\Big|\;\left\langle y,b-%
\overline{b}\right\rangle \leq \left\langle c,x-\overline{x}\right\rangle 
\text{ }\forall \;\left( b,x\right) \in \mathrm{gph}\mathcal{S}\Big\}<\infty
.
\end{equation*}
\end{theo}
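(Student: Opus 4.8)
The plan is to derive Theorem~\ref{Cor_lip_EF} as a direct specialization of the general machinery already assembled in Section~\ref{prelim}, namely Theorem~\ref{Cor_lip_subdiff}, together with the graph-based subdifferential formula just obtained in Theorem~\ref{Theo_subdiffF}. First I would observe that the epigraphical feasible set mapping $\mathcal{E}_{\mathcal{F}}$ fits the abstract pattern of Section~\ref{prelim} with $Y=\mathbb{R}^{T}$, $Z=\mathbb{R}^{n}$, $\mathcal{M}=\mathcal{F}$, and the ordering cone $\Theta=\{c_{1},\dots,c_{q}\}^{\circ}$, as recorded in Remark~\ref{Rem_C}; in particular $\mathrm{epi}\mathcal{F}=\mathrm{gph}\mathcal{E}_{\mathcal{F}}$ is convex (being the sum of the convex set $\mathrm{gph}\mathcal{F}$ and the convex cone $\{0_{T}\}\times\Theta$), and it is closed because $\mathcal{F}(b)$ is polyhedral with the fixed recession cone $\{a_{t}\}_{t\in T}^{-}$, so $\mathrm{epi}\mathcal{F}$ is locally closed around every point of its graph. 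Hence the hypotheses of Theorem~\ref{Cor_lip_subdiff} are met, and formula \eqref{eq_lip_EM} gives
\[
\mathrm{lip}\,\mathcal{E}_{\mathcal{F}}\left(\overline{b},\overline{x}\right)=\sup\big\{\Vert y\Vert\;\big|\;y\in\partial\mathcal{F}\left(\overline{b},\overline{x}\right)\big\},
\]
where the relevant dual norm on $\mathbb{R}^{T}$ is the $\ell_{1}$-norm $\Vert\cdot\Vert_{1}$, since $\mathbb{R}^{T}$ carries the supremum norm \eqref{sup-norm}.

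Next I would simply substitute the explicit description of $\partial\mathcal{F}\left(\overline{b},\overline{x}\right)$ provided by Theorem~\ref{Theo_subdiffF}, namely the union over $c\in C$ with $\Vert c\Vert_{\ast}=1$ of the sets $\{y\in\mathbb{R}^{T}\mid\langle y,b-\overline{b}\rangle\le\langle c,x-\overline{x}\rangle\ \forall(b,x)\in\mathrm{gph}\mathcal{S}\}$. Taking the supremum of $\Vert y\Vert_{1}$ over this union yields exactly the second displayed equality of the theorem, and then the Lipschitz-like characterization is an immediate consequence: by the coderivative criterion \eqref{eq_lipM}–\eqref{eq_lipEmenorlip}, $\mathcal{E}_{\mathcal{F}}$ is Lipschitz-like around $\left(\overline{b},\overline{x}\right)$ precisely when this supremum is finite. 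So the bulk of the argument is bookkeeping: checking that the general theorem applies verbatim and then feeding in the concrete subdifferential.

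The one point that genuinely requires care — and which I regard as the main obstacle — is verifying the \emph{local closedness} of $\mathrm{gph}\mathcal{E}_{\mathcal{F}}$ around $\left(\overline{b},\overline{x}\right)$, since that hypothesis is essential for the validity of the Mordukhovich criterion \eqref{eq_lipM} and hence of \eqref{eq_lip_EM}. This is where one must actually use the linear/polyhedral structure of the data in \eqref{eq_sigma}: one shows $\mathrm{gph}\mathcal{F}=\{(b,x)\mid\langle a_{t},x\rangle\le b_{t},\ t\in T\}$ is a (closed) polyhedron in $\mathbb{R}^{T}\times\mathbb{R}^{n}$, that $\mathrm{gph}\mathcal{E}_{\mathcal{F}}=\mathrm{gph}\mathcal{F}+(\{0_{T}\}\times\Theta)$, and that the Minkowski sum of a polyhedron and a polyhedral cone is again a closed polyhedron (for instance by Minkowski–Weyl, or by noting that projection of a polyhedron along a polyhedral direction set is polyhedral). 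Closedness then holds globally, a fortiori locally. With that in hand, the remaining steps are purely formal.

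A clean way to package the proof is therefore: (1) record $\mathrm{epi}\mathcal{F}=\mathrm{gph}\mathcal{E}_{\mathcal{F}}$ is convex and closed, invoking polyhedrality; (2) apply Theorem~\ref{Cor_lip_subdiff}, equation \eqref{eq_lip_EM}, with the dual norm $\Vert\cdot\Vert_{1}$ on $\mathbb{R}^{T}$, to obtain the first equality; (3) insert Theorem~\ref{Theo_subdiffF} to rewrite $\partial\mathcal{F}\left(\overline{b},\overline{x}\right)$ and obtain the second equality; (4) conclude the Lipschitz-like characterization from the finiteness of $\Vert D^{\ast}\mathcal{E}_{\mathcal{F}}\left(\overline{b},\overline{x}\right)\Vert$ via the coderivative criterion. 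No new estimate beyond what is already in Sections~\ref{prelim} and \ref{feas} is needed.
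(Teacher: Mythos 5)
Your proposal is correct and follows essentially the same route as the paper: establish that $\mathrm{gph}\,\mathcal{E}_{\mathcal{F}}=\mathrm{gph}\,\mathcal{F}+\left(\{0_{m}\}\times\{c_{1},\dots,c_{q}\}^{\circ}\right)$ is a closed convex polyhedral set (the paper notes it is in fact a polyhedral convex cone, since the defining inequalities $\langle a_{t},x\rangle-b_{t}\le 0$ are homogeneous in $(b,x)$), then apply \eqref{eq_lip_EM} from Theorem~\ref{Cor_lip_subdiff} with the dual $\ell_{1}$-norm and substitute the description of $\partial\mathcal{F}\left(\overline{b},\overline{x}\right)$ from Theorem~\ref{Theo_subdiffF}. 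The only difference is one of emphasis: you flag the closedness verification as the main obstacle, whereas the paper dispatches it in one line via polyhedrality.
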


\begin{proof}
Observe that\ $\left( b,x\right) \in \mathrm{gph}\mathcal{F}%
\Longleftrightarrow a_{t}^{\prime }x-e_{t}^{\prime }b\leq 0$ for all $%
t=1,...,m$, where $e_{t}\in \mathbb{R}^{m}$\textbf{\ }is the $t$-th vector
of the canonical basis of $\mathbb{R}^{m}$. Appealing to Remark~\ref{Rem_C},
we see that the set $\mathrm{gph}\mathcal{E}_{\mathcal{F}}$ is a polyhedral
convex cone admitting the representation 
\begin{equation*}
\mathrm{gph}\mathcal{E}_{\mathcal{F}}=\mathrm{gph}\mathcal{F+}\left(
\{0_{m}\}\times \big\{c_{1},...,c_{q}\big\}^{\circ }\right) ;
\end{equation*}%
so this set is closed and convex. Thus the claimed modulus formula follows
from \eqref{eq_lip_EM} and Theorem~\ref{Theo_subdiffF}. The last statement
of this theorem follows directly from the definition of the Lipschitz
modulus and the formula for its computation.
\end{proof}

\vspace*{0.05in}

Examples~\ref{Exa1} and \ref{Exa2} illustrate both Theorem~\ref%
{Theo_subdiffF} and Theorem~\ref{Cor_lip_EF}. They are included in the next
section for comparative purposes, specifically to point out the similarities
between the subdifferentials $\partial\mathcal{F}$ and $\partial\mathcal{P}$.

\section{Computation Formulas for Feasible Sets}

\label{scal}

In this section we derive a precise formula for representing the
epigraphical multifunction $\mathcal{E}_{\mathcal{F}}$ from 
\eqref{eq_epi
mapp} via solutions of a new linear inequality system associated with $%
\mathcal{F}(b)$. More constructive representations are obtained for some
specific forms of feasible solution sets that are especially important for
applications. All of this constitutes, in particular, the basis for
computations of the optimal value in linear programs, which is illustrated
and further developed in Section~\ref{lp} in the framework of Example~\ref%
{Exa_new}.

Let us start revealing the following relationship between the
`multiobjective epigraphical feasible set mapping' $\mathcal{E}_{\mathcal{F}%
} $ and its linear program counterpart $\mathcal{F}\left(b\right)+\left\{c%
\right\}^{\circ }$ coming from a certain scalarization technique.

\begin{theo}
\label{Th_MOP value} For any $b\in\mathbb{R}^{T}$ we have the relationship 
\begin{equation*}
\mathcal{E}_{\mathcal{F}}\left(b\right)=\bigcap_{c\in\mathrm{conv}%
\left\{c_{1},...,c_{q}\right\}}\left(\mathcal{F}\left(b\right)+\left\{c%
\right\}^{\circ}\right).
\end{equation*}
\end{theo}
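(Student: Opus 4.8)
The plan is to prove the two inclusions separately, relying on the fact that $\Theta = \{c_1,\dots,c_q\}^\circ$ and that $\Theta^\circ = \operatorname{cone}\{c_1,\dots,c_q\}$ (Remark~\ref{Rem_C}), together with the elementary observation that for a single vector $c$ one has $\{c\}^\circ = \{y \mid \langle y,c\rangle \ge 0\}$ is the closed halfspace (or all of $\mathbb{R}^n$ if $c = 0_n$). A key point is the polarity identity $\{c_1,\dots,c_q\}^\circ = \big(\operatorname{cone}\{c_1,\dots,c_q\}\big)^\circ = \big(\operatorname{conv}\{c_1,\dots,c_q\}\big)^\circ = \bigcap_{c \in \operatorname{conv}\{c_1,\dots,c_q\}} \{c\}^\circ$, which holds because polarity reverses inclusions and $\{c_1,\dots,c_q\} \subset \operatorname{conv}\{c_1,\dots,c_q\} \subset \operatorname{cone}\{c_1,\dots,c_q\}$, while the polar of a set equals the polar of its conic convex hull.

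For the inclusion ``$\subset$'': fix $b \in \mathbb{R}^T$ and $z \in \mathcal{E}_{\mathcal{F}}(b) = \mathcal{F}(b) + \Theta$, so $z = x + u$ with $x \in \mathcal{F}(b)$ and $u \in \Theta$. By the polarity identity above, $u \in \{c\}^\circ$ for every $c \in \operatorname{conv}\{c_1,\dots,c_q\}$, hence $z = x + u \in \mathcal{F}(b) + \{c\}^\circ$ for every such $c$, which gives $z \in \bigcap_{c \in \operatorname{conv}\{c_1,\dots,c_q\}} \big(\mathcal{F}(b) + \{c\}^\circ\big)$.

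For the reverse inclusion ``$\supset$'', which I expect to be the main obstacle: take $z$ in the intersection on the right-hand side; in particular, choosing $c = c_i$ for each $i = 1,\dots,q$ gives $z \in \mathcal{F}(b) + \{c_i\}^\circ$, but this alone is weaker than what we need, since $\bigcap_i (\mathcal{F}(b) + \{c_i\}^\circ)$ can be strictly larger than $\mathcal{F}(b) + \bigcap_i \{c_i\}^\circ = \mathcal{E}_{\mathcal{F}}(b)$. The point of taking the intersection over the full simplex $\operatorname{conv}\{c_1,\dots,c_q\}$ rather than just the vertices is precisely to recover the Minkowski sum. The natural route is a separation/Farkas argument: arguing by contradiction, suppose $z \notin \mathcal{F}(b) + \Theta$. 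Since $\mathcal{F}(b)$ is polyhedral and $\Theta$ is a polyhedral cone, the set $\mathcal{F}(b) + \Theta$ is a closed convex polyhedron, so there is a hyperplane strictly separating $z$ from it: there exist $v \in \mathbb{R}^n$ and $\alpha \in \mathbb{R}$ with $\langle v, z\rangle > \alpha \ge \langle v, x + u\rangle$ for all $x \in \mathcal{F}(b)$, $u \in \Theta$. Because $\Theta$ is a cone, boundedness of $\langle v, u\rangle$ over $u \in \Theta$ forces $\langle v, u\rangle \le 0$ for all $u \in \Theta$, i.e.\ $-v \in \Theta^\circ = \operatorname{cone}\{c_1,\dots,c_q\}$; scaling, we may write $v = -\mu\big(\sum_i \lambda_i c_i\big)$ with $\mu \ge 0$ and $\lambda$ in the unit simplex, so that $-v/\mu =: c \in \operatorname{conv}\{c_1,\dots,c_q\}$ (the degenerate case $v = 0$ being impossible since then $\langle v,z\rangle = 0 > \alpha \ge 0$). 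Then for this particular $c$, the separation inequality restricted to $u = 0_n$ reads $\langle c, z\rangle < \langle c, x\rangle$ for all $x \in \mathcal{F}(b)$ (after dividing by $-\mu < 0$ and flipping the inequality), which says exactly that $z \notin \mathcal{F}(b) + \{c\}^\circ$: indeed if $z = x' + u'$ with $x' \in \mathcal{F}(b)$, $u' \in \{c\}^\circ$, then $\langle c,z\rangle = \langle c,x'\rangle + \langle c,u'\rangle \ge \langle c,x'\rangle$, a contradiction. This contradicts membership of $z$ in the intersection, completing the proof. The care needed is just in handling the cone-boundedness step and the possible degeneracy $v = 0$ or $\mu = 0$ cleanly.
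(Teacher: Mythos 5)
Your proof is correct and follows essentially the same strategy as the paper: the easy inclusion via the polarity identity $\{c_1,\dots,c_q\}^{\circ}=\bigcap_{c\in\mathrm{conv}\{c_1,\dots,c_q\}}\{c\}^{\circ}$, and the reverse inclusion by contradiction, producing a direction in $\mathrm{cone}\{c_1,\dots,c_q\}$ that is normalized into $\mathrm{conv}\{c_1,\dots,c_q\}$ and yields the same final inequality chain. The only (cosmetic) difference is that the paper manufactures the separating functional as $\widehat{x}-x$ via the Euclidean projection of $x$ onto $\mathcal{E}_{\mathcal{F}}(b)$, whereas you invoke strict separation from the closed polyhedron $\mathcal{F}(b)+\Theta$ together with the cone-boundedness argument; both routes are equally valid here.
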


\begin{proof}
Confining ourselves to the nontrivial case where $\mathcal{F}%
\left(b\right)\ne\emptyset$, observe first that the inclusion `$\subset$'
follows from the obvious fact that 
\begin{equation*}
\left\{c_{1},...,c_{q}\right\}^{\circ}=\bigcap_{c\in\mathrm{conv}%
\left\{c_{1},...,c_{q}\right\}}\left\{c\right\}^{\circ}.
\end{equation*}
To verify the opposite inclusion `$\supset$', assume that $x\notin\mathcal{E}%
_{\mathcal{F}}\left(b\right)$ and then show that there exists $c\in\mathrm{%
conv}\left\{c_{1},...,c_{q}\right\}$ such that $x\notin\mathcal{F}%
\left(b\right)+\left\{c\right\}^{\circ }$. Denote by $\widehat{x}$ the
Euclidean projection of $x$ onto $\mathcal{E}_{\mathcal{F}}\left(b\right)$.
It is well known that 
\begin{equation}  \label{eq_ineq proy}
\left\langle\widehat{x}-x,y\right\rangle\ge\left\langle\widehat{x}-x,%
\widehat{x}\right\rangle \text{ for all }\;y\in\mathcal{E}_{\mathcal{F}%
}\left(b\right).
\end{equation}
In particular, for any $y_{0}\in\mathcal{F}\left(b\right)$, all $%
u\in\left\{c_{1},...,c_{q}\right\}^{\circ}$, and all $\lambda>0$ we have $%
\left\langle\widehat{x}-x,y_{0}+\lambda u\right\rangle\ge\left\langle%
\widehat{x}-x,\widehat{x}\right\rangle$. Dividing both sides of the latter
inequality by $\lambda>0$ and letting $\lambda\rightarrow\infty$ give us $%
\left\langle\widehat{x}-x,u\right\rangle\ge 0$, i.e., 
\begin{equation*}
\widehat{x}-x\in\left\{c_{1},...,c_{q}\right\}^{\circ\circ}=\mathrm{cone}%
\left\{c_{1},...,c_{q}\right\}.
\end{equation*}
Thus we have $\widehat{x}-x=\mu c$ for some $c\in\mathrm{conv}%
\left\{c_{1},...,c_{q}\right\}$ and some $\mu>0$ by taking into account that 
$\widehat{x}-x\ne 0_{n}$. To verify now that $x\not\in\mathcal{F}%
\left(b\right)+\left\{c\right\}^{\circ}$, suppose the contrary and then
deduce from the above that $x=y+u$ with some $y\in\mathcal{F}\left( b\right)$
and $u\in\left\{c\right\}^{\circ}$. It tells us that 
\begin{equation*}
\left\langle c,x\right\rangle=\left\langle c,y\right\rangle+\left\langle
c,u\right\rangle\ge\left\langle c,y\right\rangle\ge\left\langle c,\widehat{x}%
\right\rangle>\left\langle c,x\right\rangle,
\end{equation*}
where the penultimate step comes from \eqref{eq_ineq proy}, while the last
one follows from the projection inequality 
\begin{equation*}
\left\langle c,\widehat{x}-x\right\rangle=\frac{1}{\mu}\left\Vert\widehat{x}%
-x\right\Vert_{2}^{2}
\end{equation*}
with $\left\Vert\cdot\right\Vert_{2}$ standing for the Euclidean norm. The
obtained contradiction completes the proof of the theorem.
\end{proof}

\begin{rem}
\emph{Observe that in Theorem~\ref{Th_MOP value} we cannot avoid the convex
combination in the representation of $\mathcal{E}_{\mathcal{F}%
}\left(b\right) $, i.e., replace $\mathrm{conv}\left\{c_{1},...,c_{q}\right%
\} $ by $\left\{c_{1},...,c_{q}\right\}$.} \emph{To illustrate it, consider
the case where} $\mathbb{R}^{T}=\mathbb{R}^{2}$, 
\begin{equation*}
\mathcal{F}\left( b\right)=\mathrm{conv}\left\{\left(1,0\right),\left(0,1%
\right)\right\},\;\emph{and }\;
c_{1}=\left(1,0\right),\;c_{2}=\left(0,1\right).
\end{equation*}
\emph{However, the set }$\mathrm{conv}\left\{c_{1},...,c_{q}\right\}$ \emph{%
can be replaced by any basis of the cone} $C$.
\end{rem}

To establish efficient representations of the sets in the form $\mathcal{F}%
\left(b\right)+\left\{c\right\}^{\circ}$, and hence of $\mathcal{E}_{%
\mathcal{F}}\left(b\right)$ due to Theorem~\ref{Th_MOP value}, we focus now
on multifunctions $\mathcal{F}$ defined as 
\begin{equation*}
\mathcal{F}\left(\cdot\right)+\mathrm{cone}\left\{u\right\}\;\mbox{ and }\;%
\mathcal{F}\left(\cdot\right)+\mathrm{span}\left\{u\right\}.
\end{equation*}
This is done in the remainder of this section.\vspace*{0.05in}

Given $u\in\mathbb{R}^{n}$, consider first the polyhedral set $\mathcal{F}%
\left(b\right)+\mathrm{cone\,}\left\{u\right\}$ and introduce the following
partition $\left\{T_{1},T_{2}\right\}$ of $T=\left\{1,...,m\right\}$: 
\begin{equation}  \label{Eq_T12}
\begin{array}{cc}
T_{1}:=\big\{t\in T\big|\;\left\langle a_{t},u\right\rangle\le 0\big\}\;%
\mbox{ and }\; T_{2}:=\big\{t\in T\big|\;\left\langle a_{t},u\right\rangle>0%
\big\}. & 
\end{array}%
\end{equation}
Then for each $t\in T_{1}$ we denote 
\begin{equation*}
a_{\left(t,0\right)}:=a_{t}\text{ and }b_{\left(t,0\right)}:=b_{t}
\end{equation*}
and for each $\left(t,s\right)\in T_{1}\times T_{2}$ denote 
\begin{equation}  \label{eq_a_ts}
\begin{array}{cc}
a_{\left(t,s\right)}:=\left\langle a_{s},u\right\rangle a_{t}-\left\langle
a_{t},u\right\rangle a_{s}, & b_{\left(t,s\right)}:=\left\langle
a_{s},u\right\rangle b_{t}-\left\langle a_{t},u\right\rangle b_{s}.%
\end{array}%
\end{equation}
With $\widetilde{T}:=T_{1}\times\left(\{0\}\cup T_{2}\right)$ let us now
define the linear inequality system 
\begin{equation}  \label{eq_sigmatilde}
\widetilde{\sigma}\left( b\right):=\left\{\left\langle
a_{\left(t,s\right)},x\right\rangle\le
b_{\left(t,s\right)},~\left(t,s\right)\in\widetilde{T}\right\}
\end{equation}
and denote by $\widetilde{\mathcal{F}}\left(b\right)$ the set of feasible
solutions to $\widetilde{\sigma}\left(b\right)$.

\begin{rem}
\emph{If }$T_{1}=\emptyset $.\emph{\ then} $\widetilde{T}=\emptyset $ \emph{%
\ and }$\widetilde{\mathcal{F}}\left( b\right) =\mathbb{R}^{n}$. \emph{%
Otherwise we have that }$\left( a_{\left( t,s\right) },b_{\left( t,s\right)
}\right) $\emph{\ is a conic combination of }$\left( a_{t},b_{t}\right) $ 
\emph{and } $\left( a_{s},b_{s}\right) $ \emph{for all }$\left( t,s\right)
\in \widetilde{T}$. \emph{It is clear then that }$\mathcal{F}\left( b\right)
\subset \widetilde{\mathcal{F}}\left( b\right) $. \emph{Observe also that,
in contrast to }$\sigma \left( b\right) $, \emph{the new system} $\widetilde{%
\sigma }\left( b\right) $ \emph{is no longer parameterized by its RHS.}
\end{rem}

The next theorem represents $\mathcal{F}\left(b\right)+\mathrm{cone\,}%
\left\{u\right\}$ as the set of feasible solutions to the new linear
inequality system \eqref{eq_sigmatilde}.

\begin{theo}
\label{Prop_system_epi} In terms of the notation above, for any $b\in\mathbb{%
R}^{T}$ we have 
\begin{equation}  \label{eq_ef}
\widetilde{\mathcal{F}}\left(b\right)=\mathcal{F}\left(b\right)+\mathrm{%
cone\,}\left\{u\right\}.
\end{equation}
\end{theo}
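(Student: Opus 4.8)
The plan is to establish the set equality in \eqref{eq_ef} by proving the two inclusions separately, exploiting the construction of the pairs $(a_{(t,s)},b_{(t,s)})$ as explicit conic combinations of the original data. For the inclusion $\mathcal{F}(b)+\mathrm{cone\,}\{u\}\subset\widetilde{\mathcal{F}}(b)$, I would take any $x=y+\lambda u$ with $y\in\mathcal{F}(b)$ and $\lambda\ge 0$, and simply check that $x$ satisfies each inequality $\langle a_{(t,s)},x\rangle\le b_{(t,s)}$ of the system $\widetilde{\sigma}(b)$. For $s=0$ and $t\in T_1$ this is immediate since $\langle a_t,u\rangle\le 0$, so $\langle a_t,x\rangle=\langle a_t,y\rangle+\lambda\langle a_t,u\rangle\le\langle a_t,y\rangle\le b_t$. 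For $(t,s)\in T_1\times T_2$, the coefficient combination $a_{(t,s)}=\langle a_s,u\rangle a_t-\langle a_t,u\rangle a_s$ is designed precisely so that the $u$-component cancels: $\langle a_{(t,s)},u\rangle=\langle a_s,u\rangle\langle a_t,u\rangle-\langle a_t,u\rangle\langle a_s,u\rangle=0$, and since $\langle a_s,u\rangle>0$ and $-\langle a_t,u\rangle\ge 0$, applying the two feasibility inequalities for $y$ with these nonnegative multipliers yields $\langle a_{(t,s)},x\rangle=\langle a_{(t,s)},y\rangle\le b_{(t,s)}$.

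The harder inclusion is $\widetilde{\mathcal{F}}(b)\subset\mathcal{F}(b)+\mathrm{cone\,}\{u\}$, and this is where the real content lies. Given $x\in\widetilde{\mathcal{F}}(b)$, I need to produce $\lambda\ge 0$ such that $x-\lambda u\in\mathcal{F}(b)$, i.e., $\langle a_t,x\rangle-\lambda\langle a_t,u\rangle\le b_t$ for every $t\in T$. Splitting over the partition: for $t\in T_1$ the constraint $a_{(t,0)}=a_t$ already gives $\langle a_t,x\rangle\le b_t$, and since $-\langle a_t,u\rangle\ge 0$ this inequality is only helped by subtracting a nonnegative multiple of $\langle a_t,u\rangle$ — so any $\lambda\ge 0$ works for these indices. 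For $t=s\in T_2$, we need $\lambda\ge(\langle a_s,x\rangle-b_s)/\langle a_s,u\rangle$, i.e., $\lambda$ must dominate $\max_{s\in T_2}(\langle a_s,x\rangle-b_s)/\langle a_s,u\rangle$. The candidate is therefore $\lambda:=\max\{0,\max_{s\in T_2}(\langle a_s,x\rangle-b_s)/\langle a_s,u\rangle\}$, and the crux is to verify that with this choice the $T_1$-constraints of $\sigma(b)$ also hold — but I already observed those hold for every $\lambda\ge 0$, so actually the only thing to check is consistency, namely that $\lambda$ is well-defined (finite, which it is since $T_2$ is finite) and that it is genuinely needed to combine with the system $\widetilde{\sigma}(b)$ rather than just $\sigma(b)$ restricted to $T_2$. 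The point where the full system $\widetilde{\sigma}(b)$ (including the mixed pairs) is essential is when $T_1\ne\emptyset$: the mixed inequality $\langle a_{(t,s)},x\rangle\le b_{(t,s)}$, rewritten using $\langle a_s,u\rangle>0$, says $\langle a_t,x\rangle-\frac{\langle a_t,u\rangle}{\langle a_s,u\rangle}\langle a_s,x\rangle\le b_t-\frac{\langle a_t,u\rangle}{\langle a_s,u\rangle}b_s$, which after choosing $\lambda$ as above and substituting the bound $\lambda\ge(\langle a_s,x\rangle-b_s)/\langle a_s,u\rangle$ should yield $\langle a_t,x\rangle-\lambda\langle a_t,u\rangle\le b_t$ for $t\in T_1$ — but wait, since $\langle a_t,u\rangle\le 0$ for $t\in T_1$, this direction is automatic and the mixed constraints are in fact redundant for this particular inclusion. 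The mixed constraints matter for the \emph{reverse} inclusion being an equality of polyhedra described without parameter $u$; for the present proof, I expect the argument reduces cleanly to the $T_1$/$T_2$ split described above.

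So the main obstacle is purely bookkeeping: correctly handling the degenerate case $T_1=\emptyset$ (where $\widetilde{\mathcal{F}}(b)=\mathbb{R}^n$, and one must separately argue that $\mathcal{F}(b)+\mathrm{cone\,}\{u\}=\mathbb{R}^n$ — this follows because $T_1=\emptyset$ means $\langle a_t,u\rangle>0$ for all $t$, so $-u$ is a recession direction making the set unbounded in the $-u$ direction and, combined with the $+u$ added cone, one checks it fills $\mathbb{R}^n$; alternatively treat it via the convention and verify directly), and the case $T_2=\emptyset$ (where the $\lambda$-maximum is vacuous, one takes $\lambda=0$, and $\widetilde{\sigma}(b)=\sigma(b)$). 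Once these edge cases are dispatched, the generic argument is the threshold computation $\lambda:=\max\{0,\max_{s\in T_2}(\langle a_s,x\rangle-b_s)/\langle a_s,u\rangle\}$ followed by routine verification that $x-\lambda u\in\mathcal{F}(b)$, using $\langle a_t,u\rangle\le 0$ on $T_1$ and the defining maximum on $T_2$. I would present the nondegenerate case in full and remark that the degenerate cases are straightforward, referencing the preceding Remark that already records $\mathcal{F}(b)\subset\widetilde{\mathcal{F}}(b)$ and the conic-combination structure.
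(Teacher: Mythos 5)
Your first inclusion ($\mathcal{F}(b)+\mathrm{cone\,}\{u\}\subset\widetilde{\mathcal{F}}(b)$) is correct, and the observation that $\langle a_{(t,s)},u\rangle=0$ while $a_{(t,s)}$ is a conic combination of $a_{t}$ and $a_{s}$ is a clean way to see it. The gap is in the reverse inclusion, and it is not bookkeeping. You assert that for $t\in T_{1}$ the inequality $\langle a_{t},x-\lambda u\rangle\le b_{t}$ holds for \emph{every} $\lambda\ge 0$ because the inequality is ``only helped'' by the subtraction. The sign is backwards: for $t\in T_{1}$ one has $\langle a_{t},x-\lambda u\rangle=\langle a_{t},x\rangle+\lambda\left(-\langle a_{t},u\rangle\right)\ge\langle a_{t},x\rangle$, so subtracting $\lambda u$ makes these constraints \emph{harder}, and each $t\in T_{1}$ with $\langle a_{t},u\rangle<0$ imposes the \emph{upper} bound $\lambda\le(\langle a_{t},x\rangle-b_{t})/\langle a_{t},u\rangle$. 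Your ensuing conclusion that the mixed constraints $a_{(t,s)}$ are ``redundant for this particular inclusion'' is therefore false; they are exactly what makes the lower bounds coming from $T_{2}$ compatible with these upper bounds. A one-dimensional check: take $-x\le b_{1}$, $x\le b_{2}$, $u=1$, so $T_{1}=\{1\}$, $T_{2}=\{2\}$, $a_{(1,2)}=0$, $b_{(1,2)}=b_{1}+b_{2}$. Dropping the mixed constraint, as your argument implicitly licenses, would give $\widetilde{\mathcal{F}}(b)=[-b_{1},\infty)$ even when $b_{1}+b_{2}<0$, where $\mathcal{F}(b)=\emptyset$ and hence $\mathcal{F}(b)+\mathrm{cone\,}\{u\}=\emptyset$.

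Your candidate $\lambda:=\max\{0,\max_{s\in T_{2}}(\langle a_{s},x\rangle-b_{s})/\langle a_{s},u\rangle\}$ does in fact work, but proving it requires precisely the step you discard: for $t\in T_{1}$ with $\langle a_{t},u\rangle<0$ and $s\in T_{2}$, the inequality $\langle a_{(t,s)},x\rangle\le b_{(t,s)}$ is equivalent to $(\langle a_{s},x\rangle-b_{s})/\langle a_{s},u\rangle\le(\langle a_{t},x\rangle-b_{t})/\langle a_{t},u\rangle$, while the $(t,0)$-inequality makes the right-hand side nonnegative; together these guarantee that your $\lambda$ does not exceed any of the upper bounds. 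This is exactly how the paper argues: it selects $\mu\ge 0$ in the interval with lower endpoint $\max_{s\in T_{2}}(\langle a_{s},x\rangle-b_{s})/\langle a_{s},u\rangle$ and upper endpoint $\inf_{\langle a_{t},u\rangle<0}(\langle a_{t},x\rangle-b_{t})/\langle a_{t},u\rangle$, which is nonempty precisely by the choice of $x\in\widetilde{\mathcal{F}}(b)$. Your treatment of the degenerate cases $T_{1}=\emptyset$ and $T_{2}=\emptyset$ is fine; the error sits in the generic case, at the one point where the construction of $a_{(t,s)}$ actually earns its keep.
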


\begin{proof}
Let us first verify the inclusion `$\supset$' in \eqref{eq_ef}. Taking $x\in%
\mathcal{F}\left(b\right)+\mathrm{cone\,}\left\{u\right\}$, we get the
linear inequalities 
\begin{equation}  \label{eq_10}
\left\langle a_{t},x-\mu u\right\rangle\le b_{t}\text{ for all }\;t\in T 
\text{ and some }\;\mu\ge 0.
\end{equation}
There is nothing to prove if $T_1=\emptyset$. Otherwise we fix $t\in T_{1}$
and get $\left\langle a_{t},x\right\rangle\le b_{t}$. Taking further $s\in
T_{2}$, we distinguish the following two cases. If $\left\langle
a_{t},u\right\rangle=0$, then the aimed inequality 
\begin{equation*}
\left\langle a_{\left(t,s\right)},x\right\rangle\le b_{\left(t,s\right)}
\end{equation*}
reduces to $\left\langle a_{t},x\right\rangle\le b_{t}$. In the case where $%
\left\langle a_{t},u\right\rangle<0$ we deduce from \eqref{eq_10} that 
\begin{equation*}
\frac{\left\langle a_{s},x\right\rangle-b_{s}}{\left\langle
a_{s},u\right\rangle}\le\mu\le\frac{\left\langle a_{t},x\right\rangle-b_{t}}{%
\left\langle a_{t},u\right\rangle}.
\end{equation*}
In particular, it follows that 
\begin{equation*}
\frac{\left\langle a_{s},x\right\rangle-b_{s}}{\left\langle
a_{s},u\right\rangle}\le\frac{\left\langle a_{t},x\right\rangle-b_{t}}{%
\left\langle a_{t},u\right\rangle},
\end{equation*}
which readily implies that 
\begin{equation*}
\left\langle a_{\left(t,s\right)},x\right\rangle\le b_{\left(t,s\right)}
\end{equation*}
and thus verifies the inclusion `$\supset$' in \eqref{eq_ef}.

To prove now the opposite inequality `$\subset$' in \eqref{eq_ef}, pick any $%
x\in\widetilde{\mathcal{F}}\left(b\right)$ and let us verify the existence
of $\mu\ge 0$ such that 
\begin{equation*}
\left\langle a_{t},x-\mu u\right\rangle\le b_{t}\;\mbox{ for all }\;t\in T.
\end{equation*}
Indeed, when $T_{2}=\emptyset$ we get $x\in\mathcal{F}\left(b\right)$, which
agrees in this case with $\widetilde{\mathcal{F}}\left(b\right)$. If $%
T_{2}\ne\emptyset$, it is sufficient to consider any $\mu\ge 0$ satisfying 
\begin{equation*}
\max_{s\in T_{2}}\frac{\left\langle a_{s},x\right\rangle-b_{s}}{\left\langle
a_{s},u\right\rangle}\le\mu\le\inf_{\left\langle a_{t},u\right\rangle<0}%
\frac{\left\langle a_{t},x\right\rangle-b_{t}}{\left\langle
a_{t},u\right\rangle}
\end{equation*}
under the usual convention that $\inf\emptyset=\infty$. We complete the
proof of the theorem by observing that such a number $\mu$ exists due to the
choice of $x$.
\end{proof}

\vspace*{0.05in}

Looking closely at the proof of Theorem~\ref{Prop_system_epi} tells us that
the successive application of the procedure therein is instrumental to
represent the more general feasible sets $\mathcal{F}\left( b\right) +%
\mathrm{cone\,}\left\{ u_{1},...,u_{p}\right\} $ via linear inequality
systems. However, explicit forms of such representations may generally be
rather complicated. In the next theorem we consider the important case where 
\begin{equation*}
\mathcal{F}\left( b\right) +\mathrm{span\,}\big\{u\big\}
\end{equation*}%
for which we give a direct proof.

\begin{theo}
\label{Prop_F+Ru} Given any $b\in \mathbb{R}^{T}$ and recalling the notation
in \eqref{eq_a_ts}, we have 
\begin{equation}
\mathcal{F}\left( b\right) +\mathrm{span}\big\{u\big\}=\left\{ x\in \mathbb{R%
}^{n}\left\vert 
\begin{array}{ll}
\left\langle a_{t},x\right\rangle \leq b_{t}\text{ if }\left\langle
a_{t},u\right\rangle =0, &  \\ 
\left\langle a_{\left( t,s\right) },x\right\rangle \leq b_{\left( t,s\right)
}\text{ if}\;\left\langle a_{t}u\right\rangle <0 &  \\ 
\qquad \qquad \qquad \quad \;\;\mbox{and}\,\left\langle a_{s},u\right\rangle
>0 & 
\end{array}%
\right. \right\} .  \label{eq_F+span}
\end{equation}
\end{theo}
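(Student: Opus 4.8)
The statement describes $\mathcal{F}(b)+\mathrm{span}\{u\}$ by linear inequalities, and the natural strategy is to run the same two-inclusion argument as in Theorem~\ref{Prop_system_epi}, but now with a \emph{signed} multiplier. Write $Z := \{x \in \mathbb{R}^n \mid \langle a_t,x\rangle \le b_t \text{ if } \langle a_t,u\rangle = 0,\ \text{ and }\langle a_{(t,s)},x\rangle \le b_{(t,s)}\text{ if }\langle a_t,u\rangle<0,\ \langle a_s,u\rangle>0\}$ for the right-hand side of \eqref{eq_F+span}. Note that now the partition $\{T_1,T_2\}$ from \eqref{Eq_T12} plays a symmetric role: writing $T_0 := \{t : \langle a_t,u\rangle = 0\}$, $T_- := \{t : \langle a_t,u\rangle < 0\}$, $T_+ := \{t : \langle a_t,u\rangle > 0\}$, the relevant pairs are $(t,s) \in T_- \times T_+$.

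\textbf{Inclusion ``$\subset$''.} Take $x \in \mathcal{F}(b)$ and $\mu \in \mathbb{R}$; I must show $x + \mu u \in Z$. For $t \in T_0$ this is immediate since $\langle a_t, x+\mu u\rangle = \langle a_t,x\rangle \le b_t$. For $(t,s) \in T_- \times T_+$, from $\langle a_t, x+\mu u\rangle \le b_t$ and $\langle a_s,x+\mu u\rangle \le b_s$ one eliminates $\mu$: multiply the first by $\langle a_s,u\rangle > 0$ and the second by $-\langle a_t,u\rangle > 0$ and add, obtaining exactly $\langle a_{(t,s)}, x\rangle \le b_{(t,s)}$ (the $\mu$ terms cancel by the definition \eqref{eq_a_ts}). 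Since $\mathrm{span}\{u\} = \{\mu u : \mu \in \mathbb{R}\}$, this gives $\mathcal{F}(b)+\mathrm{span}\{u\} \subset Z$; the degenerate case $u=0$ or $T_1 = T$ etc. is trivial.

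\textbf{Inclusion ``$\supset$''.} Take $x \in Z$; I must produce $\mu \in \mathbb{R}$ with $\langle a_t, x - \mu u\rangle \le b_t$ for all $t \in T$. For $t \in T_0$ the constraint on $\mu$ is vacuous (it reduces to $\langle a_t,x\rangle \le b_t$, which holds). For $t \in T_-$ the inequality $\langle a_t,x\rangle - \mu\langle a_t,u\rangle \le b_t$ rearranges (dividing by the negative number $\langle a_t,u\rangle$) to a \emph{lower} bound $\mu \ge (\langle a_t,x\rangle - b_t)/\langle a_t,u\rangle$; for $s \in T_+$ it rearranges to an \emph{upper} bound $\mu \le (\langle a_s,x\rangle - b_s)/\langle a_s,u\rangle$. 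Such a $\mu$ exists iff every required lower bound is $\le$ every required upper bound, i.e.
\begin{equation*}
\frac{\langle a_t,x\rangle - b_t}{\langle a_t,u\rangle} \le \frac{\langle a_s,x\rangle - b_s}{\langle a_s,u\rangle}\quad\text{for all }(t,s)\in T_-\times T_+,
\end{equation*}
together with $T_-$ or $T_+$ possibly empty (then one takes $\mu$ arbitrarily large or small, respectively, with the usual convention $\sup\emptyset = -\infty$, $\inf\emptyset = +\infty$). But multiplying through by the positive quantity $\langle a_s,u\rangle \cdot (-\langle a_t,u\rangle)$, this inequality is precisely $\langle a_{(t,s)},x\rangle \le b_{(t,s)}$, which holds because $x \in Z$. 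Hence a valid $\mu$ exists and $x \in \mathcal{F}(b) + \mathrm{span}\{u\}$.

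\textbf{Main obstacle.} There is no deep obstacle; the only thing needing care is the bookkeeping of signs when dividing inequalities by $\langle a_t,u\rangle$ (negative for $t\in T_-$, positive for $s\in T_+$), and correctly handling the boundary cases $T_- = \emptyset$ or $T_+ = \emptyset$ so that the existence of $\mu$ is not vacuously obstructed. One should also remark that, unlike in Theorem~\ref{Prop_system_epi}, no ``diagonal'' constraints $a_{(t,0)}$ are needed here: constraints indexed by $T_0$ survive unchanged, and because $\mu$ ranges over all of $\mathbb{R}$ the one-sided constraints from $T_-$ alone (or $T_+$ alone) impose nothing, so only the genuine pairs $T_-\times T_+$ contribute. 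I would close by noting this contrast explicitly.
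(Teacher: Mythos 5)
Your proof takes exactly the route the paper does: the paper's own argument for this theorem simply dispatches the degenerate cases ($T_{1}=\emptyset$, or $T_{2}$ and $T_{0}$ both empty) and then remarks that the proof of Theorem~\ref{Prop_system_epi} goes through verbatim once the multiplier $\mu$ is no longer required to be nonnegative, which is precisely what you carry out (and in more detail, which is welcome). One bookkeeping slip in the inclusion ``$\supset$'': you have the roles of $T_{-}$ and $T_{+}$ reversed. From $\left\langle a_{t},x\right\rangle -\mu \left\langle a_{t},u\right\rangle \leq b_{t}$ one gets $\mu \leq (\left\langle a_{t},x\right\rangle -b_{t})/\left\langle a_{t},u\right\rangle$ when $\left\langle a_{t},u\right\rangle <0$ (an \emph{upper} bound, because dividing by a negative number flips the inequality) and $\mu \geq (\left\langle a_{s},x\right\rangle -b_{s})/\left\langle a_{s},u\right\rangle$ when $\left\langle a_{s},u\right\rangle >0$ (a \emph{lower} bound) --- compare the displayed chain $\max_{s\in T_{2}}\dots \leq \mu \leq \inf \dots$ in the paper's proof of Theorem~\ref{Prop_system_epi}. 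Consequently the compatibility condition is $(\left\langle a_{s},x\right\rangle -b_{s})/\left\langle a_{s},u\right\rangle \leq (\left\langle a_{t},x\right\rangle -b_{t})/\left\langle a_{t},u\right\rangle$, the reverse of your display, and it is this reversed inequality that multiplies out (via the positive factor $\left\langle a_{s},u\right\rangle \cdot (-\left\langle a_{t},u\right\rangle )$) to $\left\langle a_{(t,s)},x\right\rangle \leq b_{(t,s)}$; as written, your display multiplies out to $\left\langle a_{(t,s)},x\right\rangle \geq b_{(t,s)}$. The two sign errors are consistent with one another, so once the labels are corrected the argument is exactly the intended one; the ``$\subset$'' direction and your observation that the constraints from $T_{-}$ alone or $T_{+}$ alone impose nothing on a signed $\mu$ are both fine.
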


\begin{proof}
Let us introduce the index set%
\begin{equation*}
T_{0}:=\{t\in T\mid \left\langle a_{t},u\right\rangle =0\}\text{(}\subset
T_{1}\text{).}
\end{equation*}%
Note that, in the case $T_{1}=\emptyset $ (or $T_{2}=T_{0}=\emptyset ),$ the
system (\ref{eq_F+span}) has no inequality (i.e., its solution set is the
whole space $\mathbb{R}^{n}$), but in such cases $\mathcal{F}\left( b\right)
+\mathrm{span\,}\left\{ u\right\} $ is also $\mathbb{R}^{n}$, as $-u\in 
\limfunc{int}(O^{+}\mathcal{F}\left( b\right) )$ (or $u\in \limfunc{int}%
(O^{+}\mathcal{F}\left( b\right) )$, respectively), entailing that if $%
x_{0}\in \mathcal{F}\left( b\right) ,$ 
\begin{equation*}
x_{0}+\mathrm{span\,}\left\{ u\right\} +\lambda \mathbb{B\subset \ }\mathcal{%
F}\left( b\right) +\mathrm{span\,}\left\{ u\right\} ,\text{ for all }\lambda
\geq 0.
\end{equation*}%
If $T_{1}\diagdown T_{0}$ and $T_{2}$ are both nonempty, the reasoning is
the same followed in Proposition \ref{Prop_system_epi}, without taking into
account the sign of $\mu .$
\end{proof}

\vspace*{0.05in}

The reader will see in Example~\ref{Exa_new} below a detailed illustration
of both Theorems~\ref{Prop_system_epi} and \ref{Prop_F+Ru} together with
additional comments on the relationship between the optimal value and the
epigraphical mapping $\mathcal{E}_{\mathcal{F}}$ in linear programming.

\section{Subdifferentials of Epigraphical Pareto Fronts}

\label{pareto}

This section concerns the epigraphical Pareto front multifunction $\mathcal{E%
}_{\mathcal{P}}\colon\mathbb{R}^{T}\mathbb{\rightrightarrows R}^{q}$
introduced in \eqref{eq_epi_Pareto_front} in the form 
\begin{equation*}
\mathcal{E}_{\mathcal{P}}\left(b\right):=\mathcal{P}\left(b\right)+\mathbb{R}%
_{+}^{q},\quad b\in\mathbb{R}^{T},
\end{equation*}%
where the Pareto front mapping $\mathcal{P}$ is defined in \eqref{eq_pareto}%
. In contrast to $\mathrm{gph}\mathcal{F}$, the set $\mathrm{gph\mathcal{P}}$
is nonconvex in general, while the one of our interest $\mathrm{gph\mathrm{%
\mathcal{E}}_{\mathrm{\mathcal{P}}}}$ is \emph{always convex}. This is shown
in the next proposition.

\begin{prop}
\label{Prop_EP_convex} The set $\mathrm{gph\mathrm{\mathcal{E}}_{\mathrm{%
\mathcal{P}}}}$ is a closed and convex subset of $\mathbb{R}^{T}\times%
\mathbb{R}^{q}$.
\end{prop}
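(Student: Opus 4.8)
The plan is to show that $\mathrm{gph}\,\mathcal{E}_{\mathcal{P}}$ is both closed and convex by exhibiting it as a nice explicit set built from the feasible set mapping. The key observation is that, by Lemma~\ref{Lem_F_notS}, for any $b\in\mathrm{dom}\,\mathcal{S}$ one has
\begin{equation*}
\mathcal{E}_{\mathcal{P}}(b)=\mathcal{P}(b)+\mathbb{R}_{+}^{q}=\mathcal{C}(\mathcal{F}(b))+\mathbb{R}_{+}^{q},
\end{equation*}
since every point of $\mathcal{C}(\mathcal{F}(b))$ dominates (componentwise) some point of $\mathcal{C}(\mathcal{S}(b))=\mathcal{P}(b)$, and conversely $\mathcal{S}(b)\subset\mathcal{F}(b)$. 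Thus $\mathrm{gph}\,\mathcal{E}_{\mathcal{P}}$ coincides with the set
\begin{equation*}
\big\{(b,p)\in\mathbb{R}^{T}\times\mathbb{R}^{q}\mid \exists\,x\in\mathcal{F}(b)\ \text{with}\ \langle c_i,x\rangle\le p_i,\ i=1,\dots,q\big\},
\end{equation*}
at least over $b\in\mathrm{dom}\,\mathcal{S}$; I would also check that $\mathrm{dom}\,\mathcal{E}_{\mathcal{P}}=\mathrm{dom}\,\mathcal{S}$, so nothing is lost outside this domain.

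From this representation convexity is immediate: the set above is the image of the polyhedral convex set
\begin{equation*}
\big\{(b,x,p)\mid \langle a_t,x\rangle\le b_t\ (t\in T),\ \langle c_i,x\rangle\le p_i\ (i=1,\dots,q)\big\}\subset\mathbb{R}^{T}\times\mathbb{R}^{n}\times\mathbb{R}^{q}
\end{equation*}
under the linear projection $(b,x,p)\mapsto(b,p)$, and linear images of convex sets are convex. First I would write down this auxiliary polyhedron, note it is convex (indeed a polyhedral convex cone up to the obvious translation structure in $b$), and then invoke the projection argument.

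For closedness I would argue that the auxiliary polyhedron is in fact a polyhedral \emph{convex cone} (all the defining inequalities are homogeneous of the form $\langle a_t,x\rangle-b_t\le 0$ and $\langle c_i,x\rangle-p_i\le 0$), and the linear image of a polyhedral convex set under a linear map is again polyhedral by the Fourier–Motzkin / Weyl–Minkowski theory; hence $\mathrm{gph}\,\mathcal{E}_{\mathcal{P}}$ is polyhedral and in particular closed. Alternatively, and perhaps more in the spirit of the paper, one can note that $\mathcal{E}_{\mathcal{P}}(b)=\mathcal{E}_{\mathcal{F}}(b)$ composed with $\mathcal{C}$ in an appropriate sense, or simply give a direct sequential argument: if $(b^k,p^k)\to(b,p)$ with $x^k\in\mathcal{F}(b^k)$, $\mathcal{C}(x^k)\le p^k$, one extracts a convergent subsequence of $\{x^k\}$ — here the main technical point, and the part I expect to need the most care, is ruling out the escape-to-infinity of $\{x^k\}$; this is handled using that $b\in\mathrm{dom}\,\mathcal{S}$ together with Lemma~\ref{Lem_optimality_conditions}, exactly as in the boundedness argument already used in \emph{Step}~1 of the proof of Lemma~\ref{Lem_F_notS}, replacing $x^k$ by the dominating nondominated points $\widetilde{x}^k$ provided by Lemma~\ref{Lem_F_notS} if necessary. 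Given the availability of the polyhedral-projection route, I would lead with that and relegate the sequential argument to a remark; the only real obstacle is making the identification $\mathrm{gph}\,\mathcal{E}_{\mathcal{P}}=\text{(projection of the auxiliary polyhedron)}$ fully rigorous, which is precisely where Lemma~\ref{Lem_F_notS} does the work.
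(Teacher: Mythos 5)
Your argument is correct in substance but follows a genuinely different route from the paper's. The paper splits the two claims: closedness is obtained by observing (via the KKT conditions and Robinson's theorem) that $\mathrm{gph}\,\mathcal{P}$ is a finite union of convex polyhedral cones, so that adding $\{0_m\}\times\mathbb{R}_+^q$ preserves closedness; convexity is then proved directly by taking a convex combination of two points of the graph, noting that the corresponding combination of nondominated solutions is feasible for the combined parameter, and invoking Lemma~\ref{Lem_F_notS} to dominate it by a nondominated point. Your projection argument gets both properties in one stroke and yields the strictly stronger conclusion that $\mathrm{gph}\,\mathcal{E}_{\mathcal{P}}$ is itself a polyhedral convex cone, without appealing to Robinson's result; the price is that the entire weight of the proof rests on identifying $\mathrm{gph}\,\mathcal{E}_{\mathcal{P}}$ with the projection of your auxiliary polyhedron. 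Both proofs ultimately lean on Lemma~\ref{Lem_F_notS} in the same way.

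That identification is where your write-up has a real gap. Lemma~\ref{Lem_F_notS} gives $\mathcal{C}(\mathcal{F}(b))+\mathbb{R}_+^q=\mathcal{P}(b)+\mathbb{R}_+^q$ only for $b\in\mathrm{dom}\,\mathcal{S}$, whereas the fiber of your projection over $b$ is nonempty whenever $\mathcal{F}(b)\neq\emptyset$. The check you propose, $\mathrm{dom}\,\mathcal{E}_{\mathcal{P}}=\mathrm{dom}\,\mathcal{S}$, is a tautology and does not close this: what you actually need is $\mathrm{dom}\,\mathcal{F}=\mathrm{dom}\,\mathcal{S}$, and this can fail (take $n=q=1$, $c_1=1$, and the single constraint $x\le b_1$: then $\mathcal{S}(b)=\emptyset$ for every $b$, so $\mathrm{gph}\,\mathcal{E}_{\mathcal{P}}=\emptyset$, while your projection is all of $\mathbb{R}\times\mathbb{R}$). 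The fix is a dichotomy: by Lemma~\ref{Lem_optimality_conditions} together with standard LP solvability, $\mathcal{S}(b)\neq\emptyset$ if and only if $\mathcal{F}(b)\neq\emptyset$ and there exist $\lambda_1,\dots,\lambda_q>0$ with $-\sum_{i}\lambda_i c_i\in\mathrm{cone}\{a_t,\ t\in T\}$, and the latter condition is independent of $b$; hence either $\mathrm{dom}\,\mathcal{S}=\mathrm{dom}\,\mathcal{F}$ and your identification is exact, or $\mathrm{dom}\,\mathcal{S}=\emptyset$ and the proposition is trivial. With that case distinction added, your proof is complete.
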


\begin{proof}
First we observe that the set $\mathrm{gph\mathcal{P}}$ is a finite union of
convex polyhedral cones as the KKT (or primal/dual) optimality conditions in
linear programming allow us to express $\mathrm{gph\mathcal{P}}$ as the
graph of a certain feasible set mapping of a linear system and we can apply
then the classical result by Robinson \cite{Robinson}. Hence \emph{a fortiori%
} the set $\mathrm{gph\mathrm{\mathcal{E}}_{\mathrm{\mathcal{P}}}}=\mathrm{%
gph\mathcal{P+}}\left( \left\{ 0_{m}\right\} \times \mathbb{R}%
_{+}^{q}\right) $ is also closed.

Let us now show that the set $\mathrm{gph\mathrm{\mathcal{E}}_{\mathrm{%
\mathcal{P}}}}$ is convex. Fix any two pairs $\left(b_{1},p_{1}+u_{1}\right)$%
, $\left(b_{2},p_{2}+u_{2}\right)\in\mathrm{gph\mathrm{\mathcal{E}}_{\mathrm{%
\mathcal{P}}}}$, i.e., such that $b_{i}\in\mathbb{R}^{T}$, $p_{i}\in\mathcal{%
P}\left(b_{i}\right)$, and $u_{i}\in\mathbb{R}_{+}^{q}$ as $i=1,2$. Then for
every $\lambda\in\lbrack 0,1]$ we have 
\begin{equation*}
p_{i}=\left(\left\langle c_{1},x_{i}\right\rangle,...,\left\langle
c_{q},x_{i}\right\rangle\right) \text{ with some }\;x_{i}\in\mathcal{S}%
\left(b_{i}\right),\text{ }i=1,2,
\end{equation*}
and so $\left(1-\lambda\right)x_{1}+\lambda x_{2}\in\mathcal{F}%
\left(\left(1-\lambda\right)b_{1}+\lambda b_{2}\right)$. In the nontrivial
case where 
\begin{equation*}
\left(1-\lambda\right)x_{1}+\lambda x_{2}\notin\mathcal{S}%
\left(\left(1-\lambda\right)b_{1}+\lambda b_{2}\right)
\end{equation*}
we apply Lemma~\ref{Lem_F_notS} to get the existence of $\widetilde{x}\in%
\mathcal{S}\left(\left(1-\lambda\right) b_{1}+\lambda b_{2}\right)$ with $%
\left\langle c_{i},\widetilde{x}\right\rangle\le\left\langle
c_{i},\left(1-\lambda\right)x_{1}+\lambda x_{2}\right\rangle$ for all $%
c_1,\ldots,c_q$. It implies that 
\begin{equation*}
\left(1-\lambda\right)p_{1}+\lambda p_{2}\in\left(\left\langle c_{1}, 
\widetilde{x}\right\rangle ,...,\left\langle c_{q},\widetilde{x}%
\right\rangle\right)+\mathbb{R}_{+}^{q},
\end{equation*}
which can be equivalently written as 
\begin{equation*}
\left(1-\lambda\right) p_{1}+\lambda p_{2}\in\mathcal{E}_{\mathcal{P}%
}\left(\left(1-\lambda\right)b_{1}+\lambda b_{2}\right).
\end{equation*}
Therefore, we arrived at the inclusion 
\begin{equation*}
\left(1-\lambda\right)\left(p_{1}+u_{1}\right)+\lambda\left(p_{2}+u_{2}%
\right)\in\mathcal{E}_{\mathcal{P}}\left(\left(1-\lambda\right)b_{1}+\lambda
b_{2}\right),
\end{equation*}
which verifies the convexity of the set $\mathrm{gph\mathrm{\mathcal{E}}_{%
\mathrm{\mathcal{P}}}}$ .
\end{proof}

\vspace*{0.05in}

Using the above proposition and employing the fundamental results of Theorem~%
\ref{Cor_lip_subdiff}, we can now conduct a local stability analysis of the
epigraphical Pareto front mapping similarly to that for the epigraphical
feasible solution mapping developed in Section~\ref{feas}.

\begin{theo}
\label{Cor_lip_EP} Let $\left( \overline{b},\overline{p}\right) \in \mathrm{%
gph\mathcal{P}}$. Then we have 
\begin{equation*}
\partial \mathcal{P}\left( \overline{b},\overline{p}\right) =\dbigcup_{%
\QATOP{\scriptstyle{\alpha \in \mathbb{R}_{+}^{q}}}{\scriptstyle{\Vert
\alpha \Vert }_{\ast }{=1}}}\big\{y\in \mathbb{R}^{T}\big|\;\left\langle y,b-%
\overline{b}\right\rangle \leq \left\langle \alpha ,p-\overline{p}%
\right\rangle \text{ for all }\left( b,p\right) \in \mathrm{gph}\mathcal{P}%
\big\}.
\end{equation*}%
Furthermore, the Lipschitz modulus of the epigraphical Pareto front mapping $%
\mathcal{E}_{\mathcal{P}}$ at $\left( \overline{b},\overline{x}\right) $ is
computed by the formula 
\begin{equation*}
\mathrm{lip}\mathcal{E}_{\mathcal{P}}\left( \overline{b},\overline{x}\right)
=\sup \left\{ \{\left\Vert y\right\Vert _{1}\big|\;y\in \partial \mathcal{P}%
\left( \overline{b},\overline{p}\right) \right\} ,
\end{equation*}%
which ensures that the mapping $\mathcal{E}_{\mathcal{P}}$ is Lipschitz-like
around $\left( \overline{b},\overline{p}\right) $ if and only if 
\begin{equation*}
\sup \Big\{\{\left\Vert y\right\Vert _{1}\Big|\;\dbigcup_{\QATOP{\scriptstyle%
{\alpha \in \mathbb{R}_{+}^{q}}}{\scriptstyle{\Vert \alpha \Vert }_{\ast }{=1%
}}}\big\{y\in \mathbb{R}^{T}\big|\;\left\langle y,b-\overline{b}%
\right\rangle \leq \left\langle \alpha ,p-\overline{p}\right\rangle \text{%
for all}\;\left( b,p\right) \in \mathrm{gph}\mathcal{P}\big\}\Big\}<\infty .
\end{equation*}
\end{theo}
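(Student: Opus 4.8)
The plan is to deduce Theorem~\ref{Cor_lip_EP} as a direct application of Theorem~\ref{Cor_lip_subdiff} to the mapping $\mathcal{M}=\mathcal{P}$ with the ordering cone $\Theta=\mathbb{R}_{+}^{q}$, exactly as Theorem~\ref{Cor_lip_EF} was obtained from Theorem~\ref{Theo_subdiffF}. The one ingredient that must be checked before Theorem~\ref{Cor_lip_subdiff} can be invoked is that $\mathrm{epi}\mathcal{P}=\mathrm{gph}\mathcal{E}_{\mathcal{P}}$ is convex and locally closed around the reference point; but this is precisely the content of Proposition~\ref{Prop_EP_convex}, which gives closedness and convexity of $\mathrm{gph}\mathcal{E}_{\mathcal{P}}$ on all of $\mathbb{R}^{T}\times\mathbb{R}^{q}$. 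So I would open the proof by recalling Proposition~\ref{Prop_EP_convex} and noting that $(\overline{b},\overline{p})\in\mathrm{gph}\mathcal{P}\subset\mathrm{epi}\mathcal{P}$, so all hypotheses of Theorem~\ref{Cor_lip_subdiff} hold with $Y=\mathbb{R}^{T}$, $Z=\mathbb{R}^{q}$, $\Theta=\mathbb{R}_{+}^{q}$.

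Next I would identify the polar cone: $\Theta^{\circ}=(\mathbb{R}_{+}^{q})^{\circ}=\mathbb{R}_{+}^{q}$, since the positive orthant is self-polar under the sign convention used here (recall $\Omega^{\circ}$ is the \emph{positive} polar cone in this paper). Substituting $\Theta^{\circ}=\mathbb{R}_{+}^{q}$ and writing $z^{\ast}=\alpha$, $\mathrm{gph}\mathcal{M}=\mathrm{gph}\mathcal{P}$ into formula \eqref{eq_subdif_convex} yields immediately
\begin{equation*}
\partial\mathcal{P}\left(\overline{b},\overline{p}\right)=\dbigcup_{\QATOP{\scriptstyle{\alpha\in\mathbb{R}_{+}^{q}}}{\scriptstyle{\Vert\alpha\Vert}_{\ast}{=1}}}\big\{y\in\mathbb{R}^{T}\big|\;\left\langle y,b-\overline{b}\right\rangle\leq\left\langle\alpha,p-\overline{p}\right\rangle\text{ for all }(b,p)\in\mathrm{gph}\mathcal{P}\big\},
\end{equation*}
which is the first assertion. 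For the modulus, formula \eqref{eq_lip_EM} of Theorem~\ref{Cor_lip_subdiff} gives $\mathrm{lip}\mathcal{E}_{\mathcal{P}}(\overline{b},\overline{p})=\sup\{\Vert y\Vert_{\ast}\mid y\in\partial\mathcal{P}(\overline{b},\overline{p})\}$, and since the parameter space $\mathbb{R}^{T}$ carries the supremum norm \eqref{sup-norm}, its dual norm is the $l_{1}$-norm $\Vert\cdot\Vert_{1}$; this is the same observation already used in Theorem~\ref{Cor_lip_EF}. Substituting the displayed description of $\partial\mathcal{P}$ into this supremum gives the stated modulus formula, and the Lipschitz-like characterization is then just the equivalence $\mathrm{lip}\mathcal{E}_{\mathcal{P}}(\overline{b},\overline{p})<\infty$, which holds by the coderivative criterion \eqref{eq_lipM} (applicable since $\mathrm{gph}\mathcal{E}_{\mathcal{P}}$ is closed and the spaces are finite-dimensional).

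I expect essentially no obstacle here: the proof is a bookkeeping exercise of matching the abstract hypotheses of Theorem~\ref{Cor_lip_subdiff} to the concrete setting, the only non-trivial input being the convexity-and-closedness of $\mathrm{gph}\mathcal{E}_{\mathcal{P}}$ that was already separately established in Proposition~\ref{Prop_EP_convex}. The mildly delicate point worth a sentence is that $\mathcal{P}$ itself need not have convex graph, so one cannot apply Theorem~\ref{Cor_lip_subdiff} to $\mathcal{P}$ via some "$\mathrm{gph}\mathcal{P}$ convex" route — the theorem only ever needs $\mathrm{epi}\mathcal{P}$ convex, and indeed in \eqref{eq_subdif_convex} the graph $\mathrm{gph}\mathcal{M}$ appears only on the right-hand side as the test set, not as something required to be convex. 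I would make this explicit so the reader sees why the nonconvexity of $\mathrm{gph}\mathcal{P}$ (emphasized just before Proposition~\ref{Prop_EP_convex}) does no harm.
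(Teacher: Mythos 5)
Your proposal is correct and follows exactly the route of the paper's own (very terse) proof: invoke Proposition~\ref{Prop_EP_convex} for the closedness and convexity of $\mathrm{gph}\,\mathcal{E}_{\mathcal{P}}=\mathrm{epi}\,\mathcal{P}$, then apply Theorem~\ref{Cor_lip_subdiff} with $\Theta=\mathbb{R}_{+}^{q}$ (self-polar under the positive-polar convention) and the $\ell_1$/supremum duality of norms, as in Theorem~\ref{Cor_lip_EF}. Your explicit remark that only $\mathrm{epi}\,\mathcal{P}$, not $\mathrm{gph}\,\mathcal{P}$, needs to be convex is a worthwhile clarification the paper leaves implicit.
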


\begin{proof}
Having in hand Proposition~\ref{Prop_EP_convex}, we can apply Theorem~\ref%
{Cor_lip_subdiff} and then proceed similarly to the proofs of Theorems~\ref%
{Cor_lip_subdiff} and \ref{Cor_lip_EF}.
\end{proof}

\vspace*{0.05in}

The next result expresses the subdifferential $\partial\mathcal{F}\left(%
\overline{b},\overline{x}\right)$ in terms of $\mathrm{gph}\mathcal{P}$
instead of $\mathrm{gph}\mathcal{S}$. Observe that the difference between
the expression for $\partial\mathcal{F}\left(\overline{b},\overline{x}%
\right) $ obtained below and the one for $\partial\mathcal{P}\left(\overline{%
b},\overline{p}\right)$ established in Theorem~\ref{Cor_lip_EP} is seen only
in the sets where the vector $\alpha\in\mathbb{R}_{+}^{q}$ takes its values.

\begin{theo}
\label{Prop_subdiffF} Let $\left( \overline{b},\overline{x}\right) \in 
\mathrm{gph\mathcal{S}}$, and let $\overline{p}=\left( \left\langle c_{1},%
\overline{x}\right\rangle ,...,\left\langle c_{q},\overline{x}\right\rangle
\right) \in \mathrm{\mathcal{P}}\left( \overline{b}\right) $. Then the
subdifferential of $\mathcal{F}$ at $\left( \overline{b},\overline{x}\right) 
$ is computed by 
\begin{equation*}
\partial \mathcal{F}\left( \overline{b},\overline{x}\right) =\bigcup_{\QATOP{%
\scriptstyle{\alpha \in \mathbb{R}_{+}^{q}}}{\scriptstyle{\left\Vert \dsum
\alpha _{i}c_{i}\right\Vert }_{\ast }{=1}}}\big\{y\in \mathbb{R}^{T}\big|%
\;\left\langle y,b-\overline{b}\right\rangle \leq \left\langle \alpha ,p-%
\overline{p}\right\rangle \;\text{ for all }\;\left( b,p\right) \in \mathrm{%
gph}\mathcal{P}\big\}.
\end{equation*}
\end{theo}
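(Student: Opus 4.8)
The plan is to start from the subdifferential formula for $\partial\mathcal{F}(\overline{b},\overline{x})$ already obtained in Theorem~\ref{Theo_subdiffF}, namely the union over $c\in C$ with $\|c\|_{\ast}=1$ of the sets $\{y\mid \langle y,b-\overline{b}\rangle\le\langle c,x-\overline{x}\rangle\ \forall (b,x)\in\mathrm{gph}\mathcal{S}\}$, and to rewrite it so that the graph of $\mathcal{S}$ is replaced by the graph of $\mathcal{P}$. The bridge is the identity $C=\mathrm{cone}\{c_1,\dots,c_q\}$ from Remark~\ref{Rem_C}: every $c\in C$ can be written $c=\sum_{i=1}^q\alpha_i c_i$ with $\alpha=(\alpha_i)\in\mathbb{R}_+^q$, and the normalization $\|c\|_{\ast}=1$ becomes $\|\sum\alpha_i c_i\|_{\ast}=1$. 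So the indexing set of the union transforms exactly into the one appearing in the statement; it only remains to match the inner sets.

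The key step is the translation of the inner inequality. For a fixed $\alpha\in\mathbb{R}_+^q$ and $c=\sum_i\alpha_i c_i$, and for any $(b,x)\in\mathrm{gph}\mathcal{S}$, set $p=(\langle c_1,x\rangle,\dots,\langle c_q,x\rangle)\in\mathcal{P}(b)$; then $\langle c,x-\overline{x}\rangle=\sum_i\alpha_i(\langle c_i,x\rangle-\langle c_i,\overline{x}\rangle)=\langle\alpha,p-\overline{p}\rangle$, using that $\overline{p}=(\langle c_1,\overline{x}\rangle,\dots,\langle c_q,\overline{x}\rangle)$. Hence the condition "$\langle y,b-\overline{b}\rangle\le\langle c,x-\overline{x}\rangle$ for all $(b,x)\in\mathrm{gph}\mathcal{S}$" is exactly "$\langle y,b-\overline{b}\rangle\le\langle\alpha,p-\overline{p}\rangle$ for all $(b,p)$ of the form $(b,\mathcal{C}(x))$ with $x\in\mathcal{S}(b)$", i.e.\ for all $(b,p)\in\mathrm{gph}\mathcal{P}$ by the very definition \eqref{eq_pareto} of $\mathcal{P}$. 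This gives the inclusion "$\subset$" in the desired formula, and in fact it gives "$\supset$" as well since the map $(b,x)\mapsto(b,\mathcal{C}(x))$ is surjective from $\mathrm{gph}\mathcal{S}$ onto $\mathrm{gph}\mathcal{P}$, so the two quantified families of inequalities are literally the same family.

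The one point that needs a little care—and which I expect to be the main (mild) obstacle—is the well-definedness of the correspondence between $c$ and $\alpha$ when passing from a union indexed by $\{c\in C,\ \|c\|_{\ast}=1\}$ to one indexed by $\{\alpha\in\mathbb{R}_+^q,\ \|\sum\alpha_i c_i\|_{\ast}=1\}$. The representation $c=\sum\alpha_i c_i$ need not be unique, but this is harmless: different $\alpha$'s giving the same $c$ contribute the same inner set (because the inner set depends on $\alpha$ only through $\sum\alpha_i c_i=c$), so the union over $\alpha$ equals the union over $c$. One should also note the degenerate case $C=\{0_n\}$ (i.e.\ all $c_i=0_n$): then both index sets are empty and $\partial\mathcal{F}(\overline{b},\overline{x})=\emptyset$, consistent with both formulas. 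With these observations the proof is a short chain of equalities, essentially a change of variables in the union combined with the defining relation $\mathcal{P}=\mathcal{C}\circ\mathcal{S}$ and the hypothesis on $\overline{p}$.
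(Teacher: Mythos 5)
Your proposal is correct and is essentially the argument the paper intends: its own proof is just the one-line remark that one proceeds as in Theorem~\ref{Theo_subdiffF}, and your change of variables $c=\sum_i\alpha_i c_i$ together with the identity $\left\langle c,x-\overline{x}\right\rangle=\left\langle \alpha,\mathcal{C}(x)-\overline{p}\right\rangle$ and the surjectivity of $(b,x)\mapsto(b,\mathcal{C}(x))$ from $\mathrm{gph}\,\mathcal{S}$ onto $\mathrm{gph}\,\mathcal{P}$ is exactly the translation that remark leaves implicit. Your observations on the non-uniqueness of $\alpha$ and the degenerate case $C=\{0_n\}$ are correct and harmless, as you say.
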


\begin{proof}
Taking into account the previous considerations, we proceed similarly to the
proof of Theorem~\ref{Theo_subdiffF}.
\end{proof}

\vspace*{0.05in}

Let us now present a two-dimensional numerical example that illustrates how
both Theorems~\ref{Cor_lip_EP} and \ref{Prop_subdiffF} can be applied in
computation.

\begin{exam}
\label{Exa1} \emph{Take any} $a>0$ \emph{and consider the following
multiobjective problem} \eqref{eq_MOP} \emph{with $n=q=2$ and the Euclidean
norms on both spaces}: 
\begin{equation*}
\begin{tabular}{ccc}
$MLP\left( b\right) :$ & \textrm{minimize} & $\left( ax_{1},x_{2}\right) $
\\ 
& subject to & $x_{1}\geq b_{1}$, \\ 
&  & $x_{2}\geq b_{2}$.%
\end{tabular}%
\end{equation*}%
\emph{Letting} $\overline{b}=0_{2}$, \emph{we easily see that} 
\begin{align*}
& \mathcal{F}\left( b\right) =\mathcal{E}_{\mathcal{F}}\left( b\right)
=\{\left( b_{1},b_{2}\right) \}+\mathbb{R}_{+}^{2},\;\mathcal{S}\left(
b\right) =\text{ }\{\left( b_{1},b_{2}\right) \},\text{ } \\
& \mathcal{P}\left( b\right) =\text{ }\{\left( ab_{1},b_{2}\right) \},\;%
\text{\textrm{and }}\mathcal{E}_{\mathcal{P}}\left( b\right) =\{\left(
ab_{1},b_{2}\right) \}+\mathbb{R}_{+}^{2}.
\end{align*}%
\emph{Furthermore, using Theorems~\ref{Theo_subdiffF} and \ref{Cor_lip_EP}
tells us, respectively, that} 
\begin{eqnarray*}
\partial \mathcal{F}\left( 0_{2},0_{2}\right) &=&\bigcup_{\QATOP{\scriptstyle%
{\alpha ^{2}\alpha _{1}^{2}+\alpha _{2}^{2}=1}}{\scriptstyle{\alpha
_{1},\alpha _{2}\geq 0}}}\big\{y\in \mathbb{R}^{2}\big|%
\;b_{1}y_{1}+b_{2}y_{2}\leq a\alpha _{1}b_{1}+\alpha _{2}b_{2}\;\text{ }%
\forall \;b_{1},b_{2}\in \mathbb{R}\big\} \\
&=&\Big\{y\in \mathbb{R}^{2}\Big|\;y_{1}^{2}+y_{2}^{2}=1,\;y_{1},y_{2}\geq 0%
\Big\},
\end{eqnarray*}%
\begin{eqnarray*}
\partial \mathcal{P}\left( 0_{2},0_{2}\right) &=&\bigcup_{\QATOP{\scriptstyle%
{\alpha _{1}^{2}+\alpha _{2}^{2}=1}}{\scriptstyle{\alpha _{1},\alpha
_{2}\geq 0}}}\Big\{y\in \mathbb{R}^{2}\Big|\;b_{1}y_{1}+b_{2}y_{2}\leq
a\alpha _{1}b_{1}+\alpha _{2}b_{2}\text{ }\forall \;b_{1},b_{2}\in \mathbb{R}%
\Big\} \\
&=&\Big\{\left( y_{1},y_{2}\right) \in \mathbb{R}^{2}\Big|\;\frac{y_{1}^{2}}{%
a^{2}}+y_{2}^{2}=1,\text{ }y_{1},y_{2}\geq 0\Big\}.
\end{eqnarray*}%
\emph{Since $\mathcal{F}=\mathcal{E}_{\mathcal{F}}$ in this case,} \emph{we
can appeal to Theorem~\ref{Cor_lip_EF} (cf.\ also \cite[Corollary~3.2]%
{CDLP05}) to compute the Lipschitz modulus:} 
\begin{equation*}
\mathrm{lip}\mathcal{F}\left( 0_{2},0_{2}\right) =\mathrm{lip}\mathcal{E}_{%
\mathcal{F}}\left( 0_{2},0_{2}\right) =\sqrt{2}.
\end{equation*}%
\emph{Considering now the mappings} $\mathcal{P}$\emph{\ and }$\mathcal{E}_{%
\mathcal{P}}$, \emph{we can treat them as the feasible set mappings for the
equality and inequality systems with respect to the variables} $\left(
p_{1},p_{2}\right) $. \emph{Namely, as $(1/a)p_{1}=b_{1},p_{2}=b_{2}$}\emph{%
\ and }$(1/a)p_{1}\geq b_{1},p_{2}\geq b_{2}$, \emph{respectively. Appealing
to Theorem~\ref{Cor_lip_EP} (cf.\ also \cite[Corollary~3.2]{CDLP05}), we
obtain} 
\begin{equation*}
\mathrm{lip}\mathcal{P}\left( 0_{2},0_{2}\right) =\mathrm{lip}\mathcal{E}_{%
\mathcal{P}}\left( 0_{2},0_{2}\right) =\sqrt{1+a^{2}}.
\end{equation*}
\end{exam}

\vspace*{0.07in}

As follows from \eqref{eq_lipEmenorlip}, we have 
\begin{equation}  \label{ex-ine}
\mathrm{lip}\mathcal{P}\left(\overline{b},\overline{p}\right)\ge\mathrm{lip}%
\mathcal{E}_{\mathcal{P}}\left(\overline{b},\overline{p}\right), \text{ for
any }\left(\overline{b},\overline{p}\right)\in\mathrm{gph\mathcal{P}}.
\end{equation}%
\vspace*{0.01in}

The next example shows that the inequality in \eqref{ex-ine} may be \emph{%
strict}.

\begin{exam}
\label{Exa2} \emph{Consider the multiobjective problem} \eqref{eq_MOP} \emph{%
with $n=q=2$ and the Euclidean norms on both spaces}: 
\begin{equation*}
\begin{tabular}{ccc}
$MLP\left( b\right) :$ & \textrm{minimize} & $\left( x_{1},x_{2}\right) $ \\ 
& subject to & $x_{1}\geq b_{1}$, \\ 
&  & $x_{1}+x_{2}\geq b_{2}$.%
\end{tabular}%
\end{equation*}%
\emph{It is easy to check that }$\mathcal{F}\left( b\right) =\mathcal{E}_{%
\mathcal{F}}\left( b\right) $ for any $b\in \mathbb{R}^{2}$ \emph{and that} 
\begin{equation*}
\mathrm{lip}\mathcal{F}\left( 0_{2},0_{2}\right) =\mathrm{lip}\mathcal{E}_{%
\mathcal{F}}\left( 0_{2},0_{2}\right) =1.
\end{equation*}%
\emph{On the other hand, we clearly have the expressions} 
\begin{align*}
\mathcal{P}\left( b\right) & =\big\{p\in \mathbb{R}^{2}\big|\;p_{1}\geq
b_{1},p_{1}+p_{2}=b_{2}\},\text{ }b\in \mathbb{R}^{2}, \\
\mathcal{E}_{\mathcal{P}}\left( b\right) & =\{p\in \mathbb{R}^{2}\big|%
\;p_{1}\geq b_{1},p_{1}+p_{2}\geq b_{2}\},\text{ }b\in \mathbb{R}^{2},
\end{align*}%
\emph{with the strict inequality} 
\begin{equation*}
\mathrm{lip}\mathcal{P}\left( 0_{2},0_{2}\right) =\sqrt{5}>\mathrm{lip}%
\mathcal{E}_{\mathcal{P}}\left( 0_{2},0_{2}\right) =1.
\end{equation*}
\end{exam}

\vspace*{0.05in}

Observe that the situation of Example~\ref{Exa2} does not occur in the case
of single-objective linear programs, where we always have $\mathrm{lip}%
\mathcal{P}\left(\overline{b},\overline{p}\right)=\mathrm{lip}\mathcal{E}_{%
\mathcal{P}}\left(\overline{b},\overline{p}\right)$ with $\left(\overline{b},%
\overline{p}\right)\in\mathrm{gph\mathcal{P}}$. This is one of the main
points of the next section.

\section{Lipschitz Moduli in Linear Programming}

\label{lp}

In this section we provide specifications and further developments of the
results obtained above for the general linear multiobjective problem %
\eqref{eq_MOP} for the case of ordinary linear programs given by 
\begin{equation}  \label{eq_LP}
\begin{tabular}{ccc}
$LP\left(b\right):$ & \emph{\textrm{minimize}} & $\left\langle
c,x\right\rangle$ \\ 
& \emph{subject to} & $x\in\mathcal{F}\left(b\right)$%
\end{tabular}%
\end{equation}
where the vector $c\in\mathbb{R}^{n}$ is fixed. As shown below, the approach
and results developed for linear multiobjective problems lead us to refined
computation formulas for subdifferentials and Lipschitz moduli in standard
problems of linear programming under parameter perturbations.

In what follows we assume that $-c\in \mathrm{cone}\{a_{t},t\in T\}$ (dual
consistency); otherwise the problem $LP\left( b\right) $ is always
unsolvable. Denote by $\vartheta \colon \mathbb{R}^{T}\rightarrow \mathbb{R}%
\cup \{+\infty \}$ the associated \emph{optimal value function} defined by 
\begin{equation*}
\vartheta \left( b\right) :=\inf \big\{\left\langle c,x\right\rangle \big|%
\;x\in \mathcal{F}\left( b\right) \big\}.
\end{equation*}%
We can easily see in this framework that 
\begin{equation*}
\mathrm{dom}\mathcal{\vartheta }:=\big\{b\in \mathbb{R}^{T}\big|\;\vartheta
\left( b\right) <\infty )=\mathrm{dom}\mathcal{F}.
\end{equation*}%
Indeed, it is well known in linear programming that the boundedness of $%
LP\left( b\right) $ is equivalent to its solvability, which is in turn
equivalent to the simultaneous fulfilment of primal and dual consistency.

Observe that in the setting of \eqref{eq_LP} the multifunctions $\mathcal{S}$%
, $\mathcal{P}$, $\mathcal{E}_{\mathcal{P}}$, and $\mathcal{E}_{\mathcal{F}}$
admit the following specifications. For each $b\in\mathbb{R}^{T}$ we have
that $\mathcal{S}\left(b\right)$ is the set of optimal solutions to $%
LP\left(b\right)$ while the mapping $\mathcal{P}\colon\mathbb{R}%
^{T}\rightrightarrows\mathbb{R}$ is actually single-valued given by 
\begin{equation*}
\mathcal{P}\left(b\right)=\left\{%
\begin{array}{l}
\left\{\vartheta\left(b\right)\right\} \text{ if }\;b\in\mathrm{dom}\mathcal{%
S}, \\ 
\emptyset \text{ if }\;b\notin\mathrm{dom}\mathcal{S}.%
\end{array}%
\right.
\end{equation*}
Furthermore, we get the relationships 
\begin{equation*}
\mathrm{dom}\mathcal{P}=\mathrm{dom}\mathcal{S}=\mathrm{dom}\mathcal{F}=%
\mathrm{dom}\mathcal{\vartheta}.
\end{equation*}
Taking into account that $\mathcal{P}\left(b\right)$ is a singleton for any $%
b\in\mathrm{dom}\mathcal{S}$, from now on we write $\partial\mathcal{P}%
\left(b\right)$ instead of $\partial\mathcal{P}\left(b,\vartheta\left(b%
\right)\right)$. Moreover, for each such $b$ it follows that 
\begin{equation*}
\mathcal{E}_{\mathcal{P}}\left(b\right)=[\vartheta\left(b\right),\infty),
\end{equation*}
and it is easily to verify that 
\begin{equation}  \label{eq_F+co geq v}
\mathcal{E}_{\mathcal{F}}\left(b\right):=\mathcal{F}\left(
b\right)+\left\{c\right\}^{\circ }=\big\{x\in\mathbb{R}^{n}\big|%
\;\left\langle c,x\right\rangle\ge \vartheta\left(b\right)\big\}.
\end{equation}
It allows us to show below that Lipschitzian behavior of $\mathcal{E}_{%
\mathcal{F}}$ is closely related to that of $\mathcal{P};$ see Theorem~\ref%
{Th lipV}. To proceed, we first present the following proposition.

\begin{prop}
\label{Prop_sect5} For any $\left( \overline{b},\overline{x}\right) \in 
\mathrm{gph}\mathcal{S}$ we have 
\begin{equation*}
\partial \mathcal{P}\left( \overline{b}\right) =\left\Vert c\right\Vert
_{\ast }\partial \mathcal{F}\left( \overline{b},\overline{x}\right) .
\end{equation*}
\end{prop}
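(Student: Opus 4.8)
The idea is to compare the two subdifferential formulas we already have: the one for $\partial\mathcal{F}\left(\overline{b},\overline{x}\right)$ given by Theorem~\ref{Prop_subdiffF} (in terms of $\mathrm{gph}\mathcal{P}$, with $\alpha\in\mathbb{R}_+^q$ normalized by $\Vert\sum\alpha_ic_i\Vert_*=1$), and the one for $\partial\mathcal{P}\left(\overline{b}\right)$ given by Theorem~\ref{Cor_lip_EP} (in terms of $\mathrm{gph}\mathcal{P}$, with the multiplier in $\mathbb{R}_+^1$ normalized in absolute value to $1$). First I would note that in the scalar case $q=1$ we have a single coefficient $c_1=c$, so the vector $\alpha=(\alpha_1)$ ranges over $\mathbb{R}_+$ and $\sum\alpha_ic_i=\alpha_1 c$; hence the normalization $\Vert\alpha_1 c\Vert_*=1$ forces $\alpha_1=1/\Vert c\Vert_*$ (using $c\neq 0_n$, which holds by dual consistency $-c\in\mathrm{cone}\{a_t\}$ together with solvability, so the problem is nontrivial). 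Meanwhile in Theorem~\ref{Cor_lip_EP} the admissible $\alpha$ is exactly $\alpha_1=1$.

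Given this, the union over $\alpha$ in each formula collapses to a single set. Concretely, $\partial\mathcal{F}\left(\overline{b},\overline{x}\right)$ equals the set of $y\in\mathbb{R}^T$ with $\left\langle y,b-\overline{b}\right\rangle\le \frac{1}{\Vert c\Vert_*}\left(p-\overline{p}\right)$ for all $(b,p)\in\mathrm{gph}\mathcal{P}$, while $\partial\mathcal{P}\left(\overline{b}\right)$ is the set of $y\in\mathbb{R}^T$ with $\left\langle y,b-\overline{b}\right\rangle\le p-\overline{p}$ for all $(b,p)\in\mathrm{gph}\mathcal{P}$ (here $p$, $\overline p$ are scalars). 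I would then observe that $y\mapsto \Vert c\Vert_* y$ is a bijection between these two sets: indeed $y$ satisfies the first system of inequalities if and only if $\Vert c\Vert_* y$ satisfies the second, simply by multiplying every inequality through by the positive constant $\Vert c\Vert_*$. This gives precisely $\partial\mathcal{P}\left(\overline{b}\right)=\Vert c\Vert_*\,\partial\mathcal{F}\left(\overline{b},\overline{x}\right)$.

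The only point requiring a little care is the reduction of the normalization condition, i.e. justifying that $\Vert\alpha_1 c\Vert_*=1$ with $\alpha_1\ge 0$ has the unique solution $\alpha_1=\Vert c\Vert_*^{-1}$; this uses $c\neq 0_n$, which I would record explicitly as following from the standing assumption $-c\in\mathrm{cone}\{a_t,\,t\in T\}$ and solvability of $LP(\overline b)$ (if $c=0_n$ then $\overline b\notin\mathrm{dom}\,\mathcal{S}$ would be impossible but also the statement is vacuous / trivial; in any case the nominal point lies in $\mathrm{gph}\,\mathcal{S}$, so the setting is nondegenerate). I do not expect a genuine obstacle here — the work is really just bookkeeping with the two already-established formulas of Theorems~\ref{Cor_lip_EP} and~\ref{Prop_subdiffF} and tracking how the scalar normalization rescales the defining inequalities; the mild subtlety is making sure the $q=1$ specialization of Theorem~\ref{Prop_subdiffF}'s index set is handled cleanly.
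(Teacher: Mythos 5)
Your proposal is correct and matches the paper's own argument: the authors likewise specialize Theorems~\ref{Cor_lip_EP} and \ref{Prop_subdiffF} to $q=1$, note that the respective normalizations force the scalar multiplier to be $1$ and $1/\left\Vert c\right\Vert _{\ast }$, and conclude by factoring $\left\Vert c\right\Vert _{\ast }$ out of the defining inequalities over $\mathrm{gph}\,\mathcal{P}$. Your explicit remark that $c\neq 0_{n}$ (needed to solve $\Vert \alpha _{1}c\Vert _{\ast }=1$) is a small point the paper leaves implicit.
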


\begin{proof}
It follows from Theorems~\ref{Cor_lip_EP} and \ref{Prop_subdiffF} that 
\begin{align*}
\partial \mathcal{P}\left( \overline{b}\right) & \big\{y\in \mathbb{R}^{T}%
\big|\;\left\langle y,b-\overline{b}\right\rangle \leq p-\overline{p}\text{ }%
\forall \left( b,p\right) \in \mathrm{gph}\mathcal{P}\big\} \\
& =\left\Vert c\right\Vert _{\ast }\Big\{y\in \mathbb{R}^{T}\Big|%
\;\left\langle y,b-\overline{b}\right\rangle \leq \frac{1}{\left\Vert
c\right\Vert _{\ast }}\left( p-\overline{p}\right) \text{ }\forall \left(
b,p\right) \in \mathrm{gph}\mathcal{P}\Big\} \\
& =\left\Vert c\right\Vert _{\ast }\partial \mathcal{F}\left( \overline{b}%
\overline{x}\right) ,
\end{align*}%
which therefore verifies the claimed equality.
\end{proof}

\begin{rem}
\label{Rem_neigh} \emph{Given $\overline{b}\in\mathrm{dom}\mathcal{P}$} 
\emph{and remembering that }$\mathcal{P}\left(b\right)$\emph{\ is a
singleton for any }$b\in\mathrm{dom}\mathcal{P}$, \emph{we can write} 
\begin{equation*}
\partial\mathcal{P}\left(\overline{b}\right)=\big\{y\in\mathbb{R}^{T}\big|%
\;\left\langle y,b-\overline{b}\right\rangle\le\vartheta\left(b\right)-%
\vartheta\left(\overline{b}\right) \text{ }\forall\;b\in\mathrm{dom}\mathcal{%
P}\big\},
\end{equation*}
\emph{which agrees with the classical subdifferential of $\mathcal{P}$ at $%
\overline{b}$ in the sense of convex analysis}. \emph{It is actually not
surprising since the convexity of the function }$\vartheta$ \emph{can be
clearly derived from Proposition~\ref{Prop_EP_convex}. Going a little
further, observe that the set} $\mathrm{dom}\mathcal{F}$ \emph{can be
replaced by any intersection of the form} $\mathrm{dom}\mathcal{F\cap U}_{%
\overline{b}}$, \emph{where }$\mathcal{U}_{\overline{b}}\in\mathbb{R}^{T}$ 
\emph{is an arbitrary neighborhood of }$\overline{b}$.
\end{rem}

Now we are ready to formulate and prove the last theorem of this paper.

\begin{theo}
\label{Th lipV} Let $\left( \overline{b},\overline{x}\right) \in \mathrm{gph}%
\mathcal{S}$. Then 
\begin{equation*}
\mathrm{lip}\mathcal{P}\left( \overline{b}\right) =\mathrm{lip}\mathcal{E}_{%
\mathcal{P}}\left( \overline{b}\right) =\left\Vert c\right\Vert _{\ast }%
\mathrm{lip}\mathcal{E}_{\mathcal{F}}\left( \overline{b},\overline{x}\right)
.
\end{equation*}
\end{theo}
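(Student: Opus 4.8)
The plan is to derive the chain of equalities in Theorem~\ref{Th lipV} from the results already assembled, combining the coderivative criterion for the Lipschitz modulus with the proportionality of subdifferentials obtained in Proposition~\ref{Prop_sect5}. First I would establish the middle equality $\mathrm{lip}\,\mathcal{P}(\overline{b}) = \mathrm{lip}\,\mathcal{E}_{\mathcal{P}}(\overline{b})$. In the scalar case the Pareto front mapping $\mathcal{P}$ is single-valued with $\mathcal{P}(b) = \{\vartheta(b)\}$ on $\mathrm{dom}\,\mathcal{F}$, and $\mathcal{E}_{\mathcal{P}}(b) = [\vartheta(b),\infty)$. The key observation is that for a single-valued function into $\mathbb{R}$, the distance $d(z,\mathcal{P}(b'))$ and $d(z,\mathcal{E}_{\mathcal{P}}(b'))$ agree whenever we restrict $z$ to lie near $\overline{p}=\vartheta(\overline{b})$ with $z\in\mathcal{P}(b)$ — indeed, for $z=\vartheta(b)$ close to $\vartheta(\overline{b})$, the nearest point of $[\vartheta(b'),\infty)$ to $z$ is either $z$ itself (if $z\ge\vartheta(b')$) or $\vartheta(b')$, and in the latter case $d(z,[\vartheta(b'),\infty))=\vartheta(b')-z\le|\vartheta(b')-\vartheta(b)| = d(z,\mathcal{P}(b'))$. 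Feeding this into the modulus formula \eqref{mod} yields $\mathrm{lip}\,\mathcal{E}_{\mathcal{P}}(\overline{b})\le\mathrm{lip}\,\mathcal{P}(\overline{b})$, which is also just \eqref{ex-ine}; the reverse inequality follows because on a neighborhood where $\vartheta$ is finite and (by convexity, cf.\ Remark~\ref{Rem_neigh}) continuous, one can realize the $\mathcal{P}$-quotient using preimages on the effective domain and transfer it to the $\mathcal{E}_{\mathcal{P}}$-quotient. Alternatively, and more cleanly, both moduli equal $\sup\{\|y\|_1 \mid y\in\partial\mathcal{P}(\overline{b})\}$: for $\mathcal{E}_{\mathcal{P}}$ this is Theorem~\ref{Cor_lip_EP}, and for $\mathcal{P}$ it is the classical fact that the Lipschitz modulus of a convex function equals the supremum of dual norms of its subgradients (the convexity of $\vartheta$ being noted in Remark~\ref{Rem_neigh}).

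Next I would handle the right-hand equality $\mathrm{lip}\,\mathcal{E}_{\mathcal{P}}(\overline{b}) = \|c\|_\ast\,\mathrm{lip}\,\mathcal{E}_{\mathcal{F}}(\overline{b},\overline{x})$. By Theorem~\ref{Cor_lip_EP} we have $\mathrm{lip}\,\mathcal{E}_{\mathcal{P}}(\overline{b}) = \sup\{\|y\|_1 \mid y\in\partial\mathcal{P}(\overline{b})\}$, and by Theorem~\ref{Cor_lip_EF} we have $\mathrm{lip}\,\mathcal{E}_{\mathcal{F}}(\overline{b},\overline{x}) = \sup\{\|y\|_1 \mid y\in\partial\mathcal{F}(\overline{b},\overline{x})\}$. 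Proposition~\ref{Prop_sect5} tells us $\partial\mathcal{P}(\overline{b}) = \|c\|_\ast\,\partial\mathcal{F}(\overline{b},\overline{x})$ as sets in $\mathbb{R}^T$. Since $\|\cdot\|_1$ is positively homogeneous, taking suprema gives
\begin{equation*}
\sup\big\{\|y\|_1 \;\big|\; y\in\partial\mathcal{P}(\overline{b})\big\} = \sup\big\{\|\,\|c\|_\ast\, y'\,\|_1 \;\big|\; y'\in\partial\mathcal{F}(\overline{b},\overline{x})\big\} = \|c\|_\ast\sup\big\{\|y'\|_1 \;\big|\; y'\in\partial\mathcal{F}(\overline{b},\overline{x})\big\},
\end{equation*}
which is exactly the asserted relation. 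One small point to verify is that $\|c\|_\ast>0$, i.e.\ $c\neq 0_n$; if $c=0_n$ the optimal value is identically zero on $\mathrm{dom}\,\mathcal{F}$, all three moduli vanish, and the equality holds trivially, so one can dispose of that degenerate case at the outset.

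The main obstacle I anticipate is not the algebra but making the first equality $\mathrm{lip}\,\mathcal{P}(\overline{b}) = \mathrm{lip}\,\mathcal{E}_{\mathcal{P}}(\overline{b})$ airtight, since Remark~\ref{Rem_lip_EM_M} shows that in general $\mathrm{lip}\,\mathcal{E}_{\mathcal{M}}$ is only a lower bound for $\mathrm{lip}\,\mathcal{M}$ and can be strictly smaller, and Example~\ref{Exa2} shows this gap is genuine for vector-valued $\mathcal{P}$. The point that rescues the scalar case is precisely that $\mathcal{P}$ is single-valued and $\mathbb{R}$-valued, so its graph is the graph of a finite convex function on $\mathrm{dom}\,\mathcal{F}$ and $\mathcal{E}_{\mathcal{P}}$ is just its epigraph; for a convex function both the function-distance quotient and the epigraph-distance quotient converge to the same quantity, namely the norm of the steepest subgradient. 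I would make this rigorous by invoking the subdifferential-modulus identity from Theorem~\ref{Cor_lip_subdiff} applied to $\mathcal{M}=\mathcal{P}$ (legitimate because $\mathrm{gph}\,\mathcal{E}_{\mathcal{P}}$ is closed and convex by Proposition~\ref{Prop_EP_convex}), together with the observation in Remark~\ref{Rem_neigh} that $\partial\mathcal{P}(\overline{b})$ coincides with the convex-analysis subdifferential of $\vartheta$ at $\overline{b}$, whose dual-norm supremum is the classical Lipschitz modulus of $\vartheta$; the latter in turn equals $\mathrm{lip}\,\mathcal{P}(\overline{b})$ because $\mathcal{P}$ is single-valued. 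Assembling these three equalities completes the proof.
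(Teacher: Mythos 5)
Your proposal is correct and follows essentially the same route as the paper, whose own proof is just two sentences: the first equality is dismissed as ``standard since $\mathcal{P}$ is single-valued on $\mathbb{R}$'' (you supply the details via the distance computation and the convex-analysis identification of $\partial\mathcal{P}(\overline{b})$ with $\partial\vartheta(\overline{b})$ from Remark~\ref{Rem_neigh}), and the second is obtained, exactly as you do, by combining Theorem~\ref{Cor_lip_EP}, Theorem~\ref{Cor_lip_EF}, and the proportionality $\partial\mathcal{P}(\overline{b})=\left\Vert c\right\Vert_{\ast}\partial\mathcal{F}(\overline{b},\overline{x})$ of Proposition~\ref{Prop_sect5} with the fact that both moduli are suprema of $\left\Vert\cdot\right\Vert_{1}$ over the respective subdifferentials. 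Your extra attention to the degenerate case $c=0_{n}$ and to why Example~\ref{Exa2} does not obstruct the scalar case goes beyond what the paper records but is consistent with it.
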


\begin{proof}
The first equality is standard since $\mathcal{P}$ is single-valued on $%
\mathbb{R}$. The second equality follows from Theorem~\ref{Cor_lip_EP} and
Proposition~\ref{Prop_sect5} with taking into account the fact that the
Lipschitz moduli under consideration agree with the suprema of the norms in
the corresponding subdifferentials.
\end{proof}

\vspace*{0.05in}

The next example shows how the obtained results are applied in the case of
two-dimensional linear programming with multiply inequality constraints.

\begin{exam}
\label{Exa_new} \emph{Consider the following parameterized linear program in
the space }$\mathbb{R}^{2}$\emph{\ with the Euclidean norm on it}: 
\begin{equation*}
\begin{tabular}{ccc}
$PL\left( b\right) :$ & \textrm{minimize} & $2x_{1}+x_{2}$ \\ 
& subject to & \multicolumn{1}{r}{$-x_{1}-x_{2}\leq b_{1}$} \\ 
&  & \multicolumn{1}{r}{$-x_{1}+2x_{2}\leq b_{2}$} \\ 
&  & \multicolumn{1}{r}{$-2x_{1}\leq b_{3}$} \\ 
&  & \multicolumn{1}{r}{$3x_{1}+x_{2}\leq b_{4}$}%
\end{tabular}%
\end{equation*}%
\emph{around the nominal parameter }$\overline{b}=\left( -2,1,-2,7\right) $. 
\emph{Since} 
\begin{equation*}
\mathcal{F}\left( b\right) +\left\{ c\right\} ^{\circ }=\mathcal{F}\left(
b\right) +\mathrm{cone}\left\{ \left( 2,1\right) \right\} +\mathrm{span\,}%
\left\{ \left( -1,2\right) \right\} ,
\end{equation*}%
\emph{we first apply Theorem~\ref{Prop_system_epi} with }$u=\left(
2,1\right) $. \emph{Recalling the notation above tells us that} $%
T_{1}=\{1,2,3\},\;T_{2}=\{4\}$, \emph{and} 
\begin{equation*}
\widetilde{\sigma }\left( b\right) =\left\{ 
\begin{array}{l}
-x_{1}-x_{2}\leq b_{1},~-x_{1}+2x_{2}\leq b_{2},~-2x_{1}\leq b_{3}, \\ 
2x_{1}-4x_{2}\leq 7b_{1}+3b_{4},~-7x_{1}+14x_{2}\leq 7b_{2}, \\ 
-2x_{1}+4x_{2}\leq 7b_{3}+4b_{4}%
\end{array}%
\right\} .
\end{equation*}%
\emph{Note that the fifth constraint is equivalent to the second one for all 
}$b\in \mathbb{R}^{\widetilde{T}}$, \emph{while the sixth constraint is
redundant at }$\overline{b}$ \emph{but not at any }$b$. \emph{Remark~\ref%
{Rem_neigh} implies that the sixth inequality is irrelevant in a local
analysis around }$\overline{b}$. \emph{Anyway, let us remove just the fifth
inequality and renumber the resulting }$\widetilde{T}$ \emph{as} $%
\{1,...,5\}.$ \emph{Then apply Theorem~\ref{Prop_F+Ru} to the reduced and
renumbered system }$\widetilde{\sigma }$ \emph{with }$\widetilde{u}=\left(
-1,2\right) $ \emph{to obtain }$\left\langle \widetilde{a}_{t},\widetilde{u}%
\right\rangle <0$ \emph{for }$t\in \{1,4\}$ \emph{and }$\left\langle 
\widetilde{a}_{s},\widetilde{u}\right\rangle >0$ \emph{for }$s\in \{2,3,5\}$%
. \emph{It gives us } 
\begin{equation*}
\widetilde{\widetilde{\sigma }}\left( b\right) =\left\{ 
\begin{array}{c}
-6x_{1}-3x_{2}\leq 5b_{1}+b_{2},~-4x_{1}-2x_{2}\leq 2b_{1}+b_{3}, \\ 
-12x_{1}-6x_{2}\leq 10b_{1}+7b_{3}+4b_{4},~0\leq 35b_{1}+10b_{2}+15b_{4}, \\ 
-16x_{1}-8x_{2}\leq 14b_{1}+10b_{3}+6b_{4},~0\leq 70b_{1}+70b_{3}+70b_{4}%
\end{array}%
\right\} .
\end{equation*}%
\emph{Hence for any} $b\in \mathbb{R}^{4}$ \emph{the system} $\widetilde{%
\widetilde{\sigma }}\left( b\right) $\emph{\ is equivalent to the single
inequality} 
\begin{equation*}
2x_{1}+x_{2}\geq \max \left\{ -\frac{5b_{1}+b_{2}}{3},-\frac{2b_{1}+b_{3}}{2}%
,-\frac{10b_{1}+7b_{3}+4b_{4}}{6},-\frac{7b_{1}+5b_{3}+3b_{4}}{4}\right\}
\end{equation*}%
\emph{provided that }$\min \left\{
7b_{1}+2b_{2}+3b_{4},b_{1}+b_{3}+b_{4}\right\} \geq 0$, \emph{while
otherwise the system} $\widetilde{\widetilde{\sigma }}\left( b\right) $\emph{%
\ is infeasible. Since the right-hand side in} $\widetilde{\widetilde{\sigma 
}}\left( b\right) $ \emph{is } $\left( -9,-6,-6,45,-6,210\right) $, \emph{%
for any }$b$ \emph{close to $\overline{b}$ we have} $\widetilde{\widetilde{%
\sigma }}(b)\equiv 2x_{1}+x_{2}\geq 3$ \emph{and} 
\begin{equation*}
\widetilde{\widetilde{\sigma }}\left( b\right) \equiv 2x_{1}+x_{2}\geq \max
\left\{ -\frac{5b_{1}+b_{2}}{3},-\frac{2b_{1}+b_{3}}{2}\right\} .
\end{equation*}%
\emph{This readily implies for such $b$ that} 
\begin{equation*}
\mathcal{P}\left( b\right) =\max \left\{ -\frac{5b_{1}+b_{2}}{3},-\frac{%
2b_{1}+b_{3}}{2}\right\} .
\end{equation*}%
\emph{Employing now Remark~\ref{Rem_neigh} and the classical formula of
convex analysis for subdifferentiation of maximum functions gives us} 
\begin{equation*}
\partial \mathcal{P}\left( \overline{b}\right) =\mathrm{conv}\big\{\left(
-5/3,-1/3,0,0\right) ,\left( -1,0,-1/2,0\right) \big\}.
\end{equation*}%
\emph{Then we deduce from Theorem~\ref{Cor_lip_EP} that } 
\begin{equation*}
\limfunc{lip}\mathcal{E}_{\mathcal{P}}\left( \overline{b}\right) =\left\Vert
\left( -5/3,-1/3,0,0\right) \right\Vert _{1}=2,
\end{equation*}%
\emph{which ensures in turn by using Theorem~\ref{Th lipV} that} 
\begin{equation*}
\limfunc{lip}\mathcal{P}\left( \overline{b}\right) =2\text{\emph{\ and} }%
\mathrm{lip}\mathcal{E}_{\mathcal{F}}\left( \overline{b},\overline{x}\right)
=2/\sqrt{5}
\end{equation*}%
\emph{at any optimal solution }$\overline{x}$ \emph{of} $PL\left( \overline{b%
}\right) $.
\end{exam}

\begin{rem}
\emph{Paper \cite{GCPT19} provides an alternative way to compute the
Lipschitz modulus} $\limfunc{lip}\mathcal{P}\left( \overline{b}\right) $ 
\emph{under the additional assumption that at least one optimal solution }$%
\overline{x}$ \emph{of} $PL\left( \overline{b}\right) $ \emph{is known. As
we see, the procedure described in Example~\ref{Exa_new} does not require
such an }a priori\emph{\ information.}
\end{rem}

\section{Concluding Remarks}

\label{conc}

This paper demonstrates that employing appropriate tools of variational
analysis and generalized differentiation of set-valued mappings allows us to
efficiently deal with major sensitivity characteristics of perturbed linear
multiobjective optimization problems. Namely, in this way we explicitly
computed the subdifferentials of the feasible set and Pareto front mappings
in such problems together with the exact moduli of their Lipschitzian
stability.

In future research we plan to extend the variational approach and results
obtained in this paper to \emph{convex} problems of multiobjective
optimization by reducing them to linear systems with \emph{block
perturbations}. Observe that a similar procedure has been explored for
feasibility mappings in \emph{semi-infinite} programming with both decision
and parameter variables living in Banach spaces.


\begin{thebibliography}{99}
\bibitem{BaMo07} T. Q. BAO and B. S. MORDUKHOVICH, \emph{Variational
principles for set-valued mappings with applications to multiobjective
optimization}, Control and Cybernetics \textbf{36} (2007), 531--562.

\bibitem{BaMo10} T. Q. BAO and B. S. MORDUKHOVICH, \emph{Relative Pareto
minimizers for multiobjective problems: existence and optimality conditions}%
, Math. Program. \textbf{122} (2010), 301--347.

\bibitem{bz05} J. M. BORWEIN and Q. J. ZHU, \emph{Techniques of Variational
Analysis}, Springer, New York, 2005.

\bibitem{CDLP05} M. J. C\'{A}NOVAS, A. L. DONTCHEV, M. A. L\'{O}PEZ and J.
PARRA, \emph{Metric regularity of semi-infinite constraint systems}, Math.
Program. \textbf{104} (2005), 329--346.

\bibitem{CGP08} M. J. C\'{A}NOVAS, F. J. G\'{O}MEZ-SENENT and J. PARRA, 
\emph{On the Lipschitz modulus of the argmin mapping in linear semi-infinite
optimization}, Set-Valued Anal. \textbf{16} (2008), 511--538.

\bibitem{CKLP07} M. J. C\'{A}NOVAS, D. KLATTE, M. A. L\'{O}PEZ and J. PARRA, 
\emph{Metric regularity in convex semi-infinite optimization under canonical
perturbations}, SIAM J. Optim \textbf{18} (2007), 717--732.

\bibitem{clmp09} M. J. C\'{A}NOVAS, M. A. L\'{O}PEZ, B. S. MORDUKHOVICH and
J. PARRA, \emph{Variational analysis in semi-infinite and infinite
programming, I: Stability of linear inequality systems of feasible solutions}%
, SIAM J. Optim. \textbf{20} (2009), 1504--1526.

\bibitem{clmp12} M. J. C\'{A}NOVAS, M. A. L\'{O}PEZ, B. S. MORDUKHOVICH and
J. PARRA, Quantitative stability of linear infinite inequality systems under
block perturbations with applications to convex systems, \emph{TOP} \textbf{%
20} (2012), 310--327.

\bibitem{DoRo} A. L. DONTCHEV and R. T. ROCKAFELLAR, \emph{Implicit
Functions and Solution Mappings: A View from Variational Analysis}, 2nd
edition, Springer, New York, 2014.

\bibitem{GCPT19} M. J. GISBERT, M. J. C\'{A}NOVAS, J. PARRA and F. J.
TOLEDO, \emph{Lipschitz modulus of the optimal value in linear programming},
J. Optim. Theory Appl. \textbf{182} (2019), 133--152.

\bibitem{GoLo98} M. A. GOBERNA and M. A. L\'{O}PEZ, \emph{Linear
Semi-Infinite Optimization}, John Wiley \& Sons, Chichester, UK, 1998.

\bibitem{huy-mor-yao08} N. Q. HUY, B. S. MORDUKHOVICH and J. C. YAO, \emph{%
Coderivatives of frontier and solution maps in parametric multiobjective
optimization}, Taiwanese J. Math. \textbf{12} (2008), 2083--2111.

\bibitem{Ioffe17} A. D. IOFFE, \emph{Variational Analysis of Regular Mappings%
}, Springer, Cham, Switzerland, 2017.

\bibitem{KlKu02} D. KLATTE and B. KUMMER, \emph{Nonsmooth Equations in
Optimization: Regularity, Calculus, Methods and Applications}, Kluwer
Academic, Dordrecht, The Netherlands, 2002.

\bibitem{Mo93} B. S. MORDUKHOVICH, \emph{Complete characterizations of
openness, metric regularity, and Lipschitzian properties of multifunctions},
Trans. Amer. Math. Soc. \textbf{340} (1993), 1--35.

\bibitem{Mor06i} B. S. MORDUKHOVICH,\emph{Variational Analysis and
Generalized Differentiation, I: Basic Theory, II: Applications}, Springer,
Berlin, 2006.

\bibitem{Mor18} B. S. MORDUKHOVICH,\emph{Variational Analysis and
Applications}, Springer, Cham, Switzerland, 2018.

\bibitem{Robinson} S. M. ROBINSON, \emph{Some continuity properties of
polyhedral multifunctions}, Mathematical Programming at Oberwolfach (Proc.
Conf., Math. Forschungsinstitut, Oberwolfach, 1979). Math. Programming Stud.
No. \textbf{14} (1981), 206--214.

\bibitem{rw} R. T. ROCKAFELLAR and R. J-B. WETS, \emph{Variational Analysis}%
, Springer, Berlin, 1998.
\end{thebibliography}
\end{document}